\patchcmd{\@startsection}{\@afterindenttrue}{\@afterindentfalse}{}{}             
\patchcmd{\part}{\bfseries}{\bfseries\LARGE}{}{}
\patchcmd{\section}{\scshape}{\bfseries}{}{}\renewcommand{\@secnumfont}{\bfseries} 
\patchcmd{\@settitle}{\uppercasenonmath\@title}{\large}{}{}
\patchcmd{\@setauthors}{\MakeUppercase}{}{}{}
\newtheorem{theorem}{Theorem}[section]
\newtheorem{lemm}[theorem]{Lemma}
\newtheorem{prop}[theorem]{Proposition}
\newtheorem{coro}[theorem]{Corollary}
\theoremstyle{definition}
\newtheorem{defi}[theorem]{Definition}
\newtheorem{example}[theorem]{Example}
\newtheorem{remark}[theorem]{Remark}
\theoremstyle{remark}
\numberwithin{equation}{section}
\DeclareFontFamily{OT1}{pzc}{}                                
\DeclareFontShape{OT1}{pzc}{m}{it}{<-> s * [1.10] pzcmi7t}{}
\DeclareMathAlphabet{\mathpzc}{OT1}{pzc}{m}{it}
\DeclareSymbolFont{sfoperators}{OT1}{bch}{m}{n} 
\DeclareSymbolFontAlphabet{\mathsf}{sfoperators} 
\DeclareSymbolFont{cmletters}{OML}{cmm}{m}{it}              
\DeclareSymbolFont{cmsymbols}{OMS}{cmsy}{m}{n}
\DeclareSymbolFont{cmlargesymbols}{OMX}{cmex}{m}{n}
\DeclareMathSymbol{\myjmath}{\mathord}{cmletters}{"7C}     \let\jmath\myjmath 
\DeclareMathSymbol{\myamalg}{\mathbin}{cmsymbols}{"71}     
\DeclareMathSymbol{\mycoprod}{\mathop}{cmlargesymbols}{"60}
\DeclareMathSymbol{\myalpha}{\mathord}{cmletters}{"0B}     \let\alpha\myalpha 
\DeclareMathSymbol{\mybeta}{\mathord}{cmletters}{"0C}      \let\beta\mybeta
\DeclareMathSymbol{\mygamma}{\mathord}{cmletters}{"0D}     \let\gamma\mygamma
\DeclareMathSymbol{\mydelta}{\mathord}{cmletters}{"0E}     \let\delta\mydelta
\DeclareMathSymbol{\myepsilon}{\mathord}{cmletters}{"0F}   \let\epsilon\myepsilon
\DeclareMathSymbol{\myzeta}{\mathord}{cmletters}{"10}      \let\zeta\myzeta
\DeclareMathSymbol{\myeta}{\mathord}{cmletters}{"11}       \let\eta\myeta
\DeclareMathSymbol{\mytheta}{\mathord}{cmletters}{"12}     \let\theta\mytheta
\DeclareMathSymbol{\myiota}{\mathord}{cmletters}{"13}      \let\iota\myiota
\DeclareMathSymbol{\mykappa}{\mathord}{cmletters}{"14}     \let\kappa\mykappa
\DeclareMathSymbol{\mylambda}{\mathord}{cmletters}{"15}    \let\lambda\mylambda
\DeclareMathSymbol{\mymu}{\mathord}{cmletters}{"16}        \let\mu\mymu
\DeclareMathSymbol{\mynu}{\mathord}{cmletters}{"17}        \let\nu\mynu
\DeclareMathSymbol{\myxi}{\mathord}{cmletters}{"18}        \let\xi\myxi
\DeclareMathSymbol{\mypi}{\mathord}{cmletters}{"19}        \let\pi\mypi
\DeclareMathSymbol{\myrho}{\mathord}{cmletters}{"1A}       \let\rho\myrho
\DeclareMathSymbol{\mysigma}{\mathord}{cmletters}{"1B}     \let\sigma\mysigma
\DeclareMathSymbol{\mytau}{\mathord}{cmletters}{"1C}       \let\tau\mytau
\DeclareMathSymbol{\myupsilon}{\mathord}{cmletters}{"1D}   \let\upsilon\myupsilon
\DeclareMathSymbol{\myphi}{\mathord}{cmletters}{"1E}       \let\phi\myphi
\DeclareMathSymbol{\mychi}{\mathord}{cmletters}{"1F}       \let\chi\mychi
\DeclareMathSymbol{\mypsi}{\mathord}{cmletters}{"20}       \let\psi\mypsi
\DeclareMathSymbol{\myomega}{\mathord}{cmletters}{"21}     \let\omega\myomega
\DeclareMathSymbol{\myvarepsilon}{\mathord}{cmletters}{"22}\let\varepsilon\myvarepsilon
\DeclareMathSymbol{\myvartheta}{\mathord}{cmletters}{"23}  \let\vartheta\myvartheta
\DeclareMathSymbol{\myvarpi}{\mathord}{cmletters}{"24}     \let\varpi\myvarpi
\DeclareMathSymbol{\myvarrho}{\mathord}{cmletters}{"25}    \let\varrho\myvarrho
\DeclareMathSymbol{\myvarsigma}{\mathord}{cmletters}{"26}  \let\varsigma\myvarsigma
\DeclareMathSymbol{\myvarphi}{\mathord}{cmletters}{"27}    \let\varphi\myvarphi
\DeclareMathOperator{\Supp}{Supp}
\DeclareMathOperator{\Pf}{Pf}
\DeclareMathOperator{\sign}{sgn}
\newcommand\F{{\mathbb F}}
\newcommand\N{{\mathbb N}}
\newcommand\R{{\mathbb R}}
\newcommand\Z{{\mathbb Z}}
\newcommand\cB{{\mathcal B}}
\newcommand\fm{{\mathfrak m}}
\newcommand\bfP{{\bf P}}
\renewcommand\geq{\geqslant}
\renewcommand\leq{\leqslant}
\title{Representability of orthogonal matroids over partial fields}
\author{Matthew Baker}
\email{mbaker@math.gatech.edu}
\address{School of Mathematics, Georgia Institute of Technology, Atlanta, USA}
\author{Tong Jin}
\email{tongjin@gatech.edu}
\address{School of Mathematics, Georgia Institute of Technology, Atlanta, USA}
\thanks{The authors' research was supported by a Simons Foundation Travel Grant and NSF Research Grant DMS-2154224.
We thank Tianyi Zhang for helpful discussions on the proof of Theorem~\ref{theorem:G-P}.}
\begin{document}

\maketitle

\begin{abstract}
Let $r \leq n$ be nonnegative integers, and let $N = \binom{n}{r} - 1$. For a matroid $M$ of rank $r$ on the finite set $E = [n]$ and a partial field $k$ in the sense of Semple--Whittle, it is known that the following are equivalent: (a) $M$ is representable over $k$; (b) there is a point $p = (p_J) \in \bfP^N(k)$ with support $M$ (meaning that $\Supp(p) := \{J \in \binom{E}{r} \; \vert \; p_J \ne 0\}$ of $p$ is the set of bases of $M$) satisfying the Grassmann-Pl{\"u}cker equations; and (c) there is a point $p = (p_J) \in \bfP^N(k)$ with support $M$ satisfying just the 3-term Grassmann-Pl{\"u}cker equations. Moreover, by a theorem of P. Nelson, almost all matroids (meaning asymptotically 100\%) are not representable over any partial field. We prove analogues of these facts for Lagrangian orthogonal matroids in the sense of Gelfand--Serganova, which are equivalent to even Delta-matroids in the sense of Bouchet.  
\end{abstract}


\section{Introduction}\label{section:introduction}

For simplicity, throughout this introduction $k$ will denote a field, but all statements remain true when $k$ is a {\em partial field} in the sense of Semple and Whittle~\cite{SW96}, and the proofs will be written in that generality.

\medskip

Let $E$ be a finite set of size $n$, which for concreteness we will sometimes identify with the set $[n] := \{ 1,2,\ldots,n \}$. 
Let $r$ be a nonnegative integer, and let $N = \binom{n}{r} - 1$. 
Let $\binom{E}{r}$ denote the family of all $r$-subsets of $E$. 
We will be considering the projective space $\bfP^N(k)$ with coordinates indexed by the $r$-element subsets of $E$.

\medskip

Let $A$ be an $r \times n$ matrix of rank $r$ whose columns are indexed by $E$. Define $\Delta: \binom{E}{r} \to k$ by $\Delta(J) = \det A_J$, where $A_J$ is the $r \times r$ maximal square submatrix whose set of columns is $J$. We can extend the map $\Delta$ to all subsets of $E$ of size at most $r$ by setting $\Delta(J) = 0$ if $|J| < r$. For every $S \in \binom{E}{r+1}$, $T \in \binom{E}{r-1}$, and $x \in S$, we define $\sign(x; S, T)$ to be $(-1)^m$, where $m$ is the number of elements $s \in S$ with $s > x$ plus the number of elements $t \in T$ with $t > x$. Basic properties of determinants imply that the point $p = (p_J)_{J \in \binom{E}{r}} \in \bfP^N(k)$ defined by $p_J := \Delta(J)$ satisfies the following homogeneous quadratic equations, 
called the {\em Pl{\"u}cker equations} (cf. e.g. \cite[\S{4.3}]{MS15}): 

\begin{equation} \label{eq:GP}
\sum_{x \in S} \sign(x; S, T) X_{S \backslash \{x \}} X_{T \cup \{x\}} = 0. 
\end{equation}

When $S \backslash T = \{i < j < k\}$, we may assume without loss of generality that $T \backslash S = \{\ell\}$ and $\ell > k$. Then we obtain the {\emph{$3$-term Pl{\"u}cker} equations}, which are of particular importance: 
\begin{equation} \label{eq:3termGP}
X_{S \backslash \{i\}}X_{T \cup \{i\}} - X_{S \backslash \{j\}}X_{T \cup \{j\}} + X_{S \backslash \{k\}}X_{T \cup \{k\}} = 0. 
\end{equation}

\medskip

The following result is fundamental:

\begin{theorem}\label{theorem:G-P}
Let $k$ be a field. 
The following are equivalent for a point $p = (p_J) \in \bfP^N(k)$: 
\begin{enumerate}
\item There exists an $r \times n$ matrix $A$ of rank $r$ with entries in $k$ such that $p_J = \det(A_J)$ for any $J \in \binom{E}{r}$. 
\item The point $p$ satisfies the Pl{\"u}cker equations. 
\item The support $\Supp(p) = \{J \in \binom{E}{r} \; \vert \; p_J \ne 0\}$ of $p$ is the set of bases of a matroid of rank $r$ on $E$, and $p$ satisfies the $3$-term Pl{\"u}cker equations. 
\end{enumerate}
\end{theorem}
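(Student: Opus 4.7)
\medskip

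The plan is to prove the implications $(1) \Rightarrow (2) \Rightarrow (3) \Rightarrow (1)$, with the first two being comparatively routine and the third carrying the real content.

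For $(1) \Rightarrow (2)$, I would invoke the classical Laplace/Sylvester identities for determinants: given $S \in \binom{E}{r+1}$ and $T \in \binom{E}{r-1}$, expanding $\det(A_{S\setminus\{x\}})$ along the columns of $T$ (or directly applying the quadratic Plücker identity for exterior powers of $k^r$) yields precisely (\ref{eq:GP}). In the partial-field setting, one should check that only sums of products of existing determinants appear, which is fine because each individual term $\Delta(S \setminus \{x\}) \cdot \Delta(T \cup \{x\})$ is a product of minors, hence lies in $k$, and the whole expression sums to zero in the ambient ring where $k$ is defined. The implication $(2) \Rightarrow (3)$ has two parts: the 3-term relations are a special case of (\ref{eq:GP}), so it remains to show that $\Supp(p)$ forms the basis system of a matroid. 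For this, given $B_1, B_2 \in \Supp(p)$ and $i \in B_1 \setminus B_2$, I would apply the Plücker relation (\ref{eq:GP}) with $S = B_1 \cup \{j\}$ and $T = B_2 \setminus \{j\}$ for a well-chosen $j$; at least one of the other monomials must be nonzero since $p_{B_1}p_{B_2} \neq 0$, producing the exchange partner required by the basis exchange axiom.

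The main obstacle is $(3) \Rightarrow (1)$, the reconstruction of a matrix from the Plücker data. My plan is the standard tableau-style construction. Fix $B_0 \in \Supp(p)$; after relabeling assume $B_0 = [r]$, and after a projective rescaling assume $p_{B_0} = 1$. Define an $r \times n$ matrix $A$ by setting $A_{B_0}$ equal to the identity $I_r$ and, for each $i \in [r]$ and $j \in E \setminus B_0$, placing $\pm p_{(B_0 \setminus \{i\}) \cup \{j\}}$ in position $(i,j)$ with a sign chosen by the $\sign$ convention used in (\ref{eq:GP}). The task is then to show $\det(A_J) = p_J$ for every $J \in \binom{E}{r}$. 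I would proceed by induction on $d(J) := |J \setminus B_0|$. The case $d(J) \leq 1$ holds by construction. For the inductive step, given $J$ with $d(J) \geq 2$, I would pick $i, j \in J \setminus B_0$ and an auxiliary pair from $B_0 \setminus J$, and apply a 3-term Plücker relation (\ref{eq:3termGP}) involving $J$ and three Plücker coordinates of strictly smaller distance to $B_0$; the matroid hypothesis on $\Supp(p)$ is needed to guarantee that not all of these coordinates vanish, so that the relation genuinely determines $p_J$. Cofactor expansion of $\det(A_J)$ along a suitable row/column yields a relation of identical shape, and by induction $p_J = \det(A_J)$.

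Two additional points require care in the partial-field setting. First, when dividing by Plücker coordinates during the inductive step, one must verify that all such divisions are by units of $k$; this is exactly where the hypothesis $p_{B_0} \in k^{\times}$ (or rather $p_B \in k^\times$ for bases $B$) is used, so the argument works over a partial field because the nonzero Plücker coordinates are, by the definition of a partial-field point of $\bfP^N(k)$, units. Second, the inductive relation may have several equivalent forms depending on choices of $i,j$; one must show they yield the same value of $p_J$, which is a consistency check that follows from applying another 3-term relation among the smaller-distance coordinates. Once $\det(A_J) = p_J$ holds for every $J$, the rank of $A$ is $r$ because $\det(A_{B_0}) = 1 \neq 0$, completing the proof.
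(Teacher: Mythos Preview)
Your overall plan matches the paper's: build $A = (I_r \mid a_{ij})$ with $a_{ij} = (-1)^{r+i} p_{([r] \setminus \{i\}) \cup \{j\}}$ and prove $\det(A_J) = p_J$ by induction on $d(J) = |J \setminus [r]|$, comparing a $3$-term relation for $p$ with the corresponding one for the minors of $A$. But there is a real gap in the inductive step. Your claim that ``the matroid hypothesis on $\Supp(p)$ is needed to guarantee that not all of these coordinates vanish'' only applies when $J$ itself is a basis: then two applications of basis exchange (from $J$ toward $[r]$) locate $a,b \in J \setminus [r]$ and $i,j \in [r] \setminus J$ with $B'' := J \setminus \{a,b\} \cup \{i,j\}$ a basis, and $p_{B''}$ is the nonzero pivot multiplying $p_J$. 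When $J$ is \emph{not} a basis, the exchange axiom says nothing about $J$, and it can happen that $J \setminus \{a,b\} \cup \{i,j\}$ is a non-basis for \emph{every} choice of $a,b,i,j$ (e.g.\ take $M$ to have the single basis $[r]$ together with $n-r \geq r$ loops, and $J$ any $r$-set of loops). Then no $3$-term relation has a nonzero coefficient on $p_J$, so your comparison cannot force $\det(A_J) = 0$.

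The paper treats this as a separate Case~2. If no such pivot exists for a fixed pair $a,b$, one checks (via one more basis exchange) that $J \setminus \{a\} \cup \{i\}$ is a non-basis for every $i \in [r] \setminus J$, hence by induction $\det(A_{J \setminus \{a\} \cup \{i\}}) = 0$. Now apply the \emph{full} Pl\"ucker relation \eqref{eq:GP} with $S = [r] \cup \{a\}$ and $T = J \setminus \{a\}$ --- not to $p$ (which is only assumed to satisfy the $3$-term relations) but to the minors of $A$, which satisfy all of \eqref{eq:GP} by the already-established $(1) \Rightarrow (2)$. Every term except $\pm \det(A_{[r]}) \det(A_J)$ vanishes, giving $\det(A_J) = 0 = p_J$. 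This switch from $3$-term relations on $p$ to full relations on the constructed $A$ is the idea missing from your sketch; ``cofactor expansion of $\det(A_J)$'' does not produce it. (Aside: the ``consistency check'' you propose at the end is unnecessary --- once a single relation with nonzero pivot determines both $p_J$ and $\det(A_J)$ from identical lower-order data, equality follows and nothing further needs to be verified.)
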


The equivalence of (1) and (2) in the theorem is just the well-known classical fact that the Grassmannian variety of $r$-dimensional subspaces of a fixed $n$-dimensional vector space is defined (set-theoretically) by the Pl{\"u}cker equations.
The equivalence of (2) and (3) is a folklore fact, but we are not aware of a published reference which furnishes a direct proof, so we give one in Section~\ref{section:G-P} below.

\medskip

One of the interesting features of Theorem~\ref{theorem:G-P} is that it is a `purely algebraic' fact about matrices over a field but its statement involves the combinatorial notion of a matroid. Recall that a {\emph{matroid}} $M$ on $E$ is a nonempty collection $\cB$ of subsets of $E$ satisfying the following  
{\em exchange axiom}: 

\begin{quote}
If $B_1, B_2 \in \cB$, then for any $x \in B_1 \backslash B_2$, there exists an element $y \in B_2 \backslash B_1$ such that $(B_1 \backslash \{x\} ) \cup \{y\}$ belongs to $\cB$. 
\end{quote}

\medskip

This turns out to be equivalent to the {\em a priori} stricter {\em strong exchange axiom}:

\begin{quote}
If $B_1, B_2 \in \cB$, then for any $x \in B_1 \backslash B_2$, there exists an element $y \in B_2 \backslash B_1$ such that $(B_1 \backslash \{x\} ) \cup \{y\}$ and 
$(B_2 \backslash \{y\} ) \cup \{x\}$ both belong to $\cB$. 
\end{quote}

\medskip

The set $E$ is called the {\emph{ground set}} of $M$, and the members of $\cB$ are called the {\emph{bases}}. 
All bases of a matroid $M$ have the same cardinality, called the {\emph{rank}} of $M$.

\medskip

A matroid whose bases are the support of some point $p = (p_J) \in \bfP^N(k)$ satisfying the Pl{\"u}cker equations (or, equivalently, the 3-term Pl{\"u}cker equations) is called {\em representable} over $k$.

\medskip

It is natural to ask whether a `typical' matroid is representable over some field. The answer turns out to be no.
This follows by combining the following two estimates:

\begin{theorem}[Knuth \cite{Kn74}]\label{theorem:numberofmatroids}
Denote by $m_n$ the number of isomorphism classes of matroids on ground set $[n] = \{1, 2, \dots, n\}$. Then 
\[
\log \log m_n \geq n - \frac{3}{2}\log n - O(1). 
\]
\textup{(}Here $\log$ is taken to base two.\textup{)}
\end{theorem}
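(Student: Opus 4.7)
The plan is to exhibit sufficiently many \emph{sparse paving} matroids on $[n]$. Recall that a matroid $M$ of rank $r$ on $[n]$ is sparse paving if every $r$-subset of $[n]$ is either a basis or a circuit-hyperplane. Equivalently, writing $\mathcal{N} = \binom{[n]}{r} \setminus \cB$ for the non-bases of size $r$, one checks that $\cB$ is the collection of bases of such a matroid if and only if no two distinct members of $\mathcal{N}$ have symmetric difference of size exactly $2$; in graph-theoretic language, $\mathcal{N}$ must be an independent set in the \emph{Johnson graph} $J(n,r)$, whose vertex set is $\binom{[n]}{r}$ and whose edges join pairs differing by a single exchange. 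Thus the number $\tilde m_n$ of labeled matroids on $[n]$ is bounded below, for any $r$, by the number of independent sets in $J(n,r)$, which is at least $2^{\alpha(J(n,r))}$.

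To bound $\alpha(J(n,r))$ from below I would use a coloring argument. The map $c: \binom{[n]}{r} \to \Z/n\Z$ given by $c(S) = \sum_{s \in S} s \pmod n$ is a proper coloring: if $S' = (S \setminus \{x\}) \cup \{y\}$ with $x \neq y$ in $[n]$, then $c(S') - c(S) \equiv y - x \not\equiv 0 \pmod n$ since $|y - x| \in \{1, \ldots, n-1\}$. Consequently $\chi(J(n,r)) \leq n$, and pigeonhole gives $\alpha(J(n,r)) \geq \binom{n}{r}/n$. Specializing to $r = \lfloor n/2 \rfloor$ and applying Stirling's formula yields a constant $c_0 > 0$ with $\alpha\bigl(J(n, \lfloor n/2 \rfloor)\bigr) \geq c_0 \cdot 2^n / n^{3/2}$, hence $\tilde m_n \geq 2^{c_0 \cdot 2^n/n^{3/2}}$.

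Finally, since every isomorphism class of matroids on $[n]$ contains at most $n!$ labeled representatives, $m_n \geq \tilde m_n / n!$. Using $\log(n!) = O(n \log n) = o(2^n/n^{3/2})$, this gives $\log m_n \geq c_0 \cdot 2^n/n^{3/2} - O(n \log n)$, and taking a second logarithm yields $\log \log m_n \geq n - \tfrac{3}{2}\log n - O(1)$, as required. The only non-routine step is the sparse-paving/independent-set correspondence, which reduces to a direct verification of the (strong) exchange axiom for $\cB = \binom{[n]}{r} \setminus \mathcal{N}$ when $\mathcal{N}$ is an independent set in $J(n,r)$; once this is in hand, the chromatic number bound and the final asymptotic estimate are straightforward.
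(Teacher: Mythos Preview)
The paper does not supply a proof of this theorem; it is stated with attribution to Knuth~\cite{Kn74} and used as a black box in deriving Theorem~\ref{theorem:not-representableM}. So there is no in-paper argument to compare against.

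Your proof is correct and is essentially the standard argument, close in spirit to Knuth's original. The identification of sparse paving matroids of rank $r$ on $[n]$ with independent sets in the Johnson graph $J(n,r)$ is well known and the verification you sketch is routine. The proper $n$-coloring $c(S)=\sum_{s\in S}s\pmod n$ is exactly the ingredient Knuth uses (phrased slightly differently), yielding $\alpha(J(n,r))\geq\binom{n}{r}/n$. Specializing to $r=\lfloor n/2\rfloor$, applying Stirling, dividing by $n!$ to pass from labeled matroids to isomorphism classes, and taking two logarithms gives the stated bound; all of these steps are handled correctly in your write-up. The only place worth a line of extra care in a formal version is the claim that every independent set $\mathcal N$ in $J(n,r)$ yields a matroid with basis set $\binom{[n]}{r}\setminus\mathcal N$, but as you note this is a direct check of the exchange axiom.
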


\begin{theorem}[Nelson~\cite{Ne18}]
For $n \geq 12$, the number $r_n$ of isomorphism classes of matroids on the ground set $[n]$ which are representable over some field satisfies
\[
\log r_n \leq n^3 / 4.
\]
\end{theorem}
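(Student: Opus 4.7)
The plan is to bound, for each rank $r$, the number of representable matroids of rank $r$ on $[n]$ via a zero-pattern counting argument, then sum over $r$. The core object is the collection of maximal minors of a matrix in reduced row echelon form, viewed as polynomials in the non-pivot entries.

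First, I would reduce to a canonical form. If $M$ is a rank-$r$ matroid on $[n]$ representable over a field $k$, pick any basis $B$ of $M$; after row reduction and column permutation one obtains an $r \times n$ representation whose $B$-columns form the $r \times r$ identity. Such a matrix is specified by an $r \times (n-r)$ block of entries in $k$, i.e.\ by $\ell = r(n-r) \leq n^2/4$ parameters. The matroid $M$ is recovered as the set of $J \in \binom{[n]}{r}$ on which the maximal minor $\Delta_J$ is nonzero. Treating the $\ell$ entries as indeterminates, each of the $m = \binom{n}{r}$ minors $\Delta_J$ is a polynomial in $\Z[x_1, \ldots, x_\ell]$ of degree at most $r \leq n$.

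Next, I would apply the zero-pattern theorem of R\'onyai--Babai--Ganapathy: for a family of $m$ polynomials of degree at most $d$ in $\ell$ variables, the number of realizable zero patterns over any field is at most $\binom{md+\ell}{\ell}$. Applied to our minors this bounds, for each choice of pivot set $B$, the number of matroids representable by a matrix of the given echelon type. Summing over the $\binom{n}{r} \leq 2^n$ choices of $B$ and over $r$, and plugging in $m \leq 2^n$, $d \leq n$, $\ell \leq n^2/4$, the log of the resulting total is
\[
\log r_n \;\leq\; O(n) + \frac{n^2}{4}\log\!\left(\frac{e(n \cdot 2^n + n^2/4)}{n^2/4}\right) \;\leq\; \frac{n^3}{4} + o(n^3),
\]
and a slightly sharper estimate absorbs the lower-order terms into $n^3/4$ for $n \geq 12$.

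The main obstacle is the \emph{uniformity} of the zero-pattern bound in the field $k$: at first glance R\'onyai--Babai--Ganapathy counts zero patterns realized over a single field, whereas matroid representability allows $k$ to vary. This is resolved by inspecting the R\'onyai--Babai--Ganapathy argument, which produces, for each realized zero pattern $S$, an explicit witness polynomial in a fixed finite-dimensional space of polynomials over $\Z$, together with a linear-independence argument. Since linear independence is preserved by base change to any larger ring, the same count bounds the number of zero patterns realized by any field whatsoever, yielding the desired uniform estimate.
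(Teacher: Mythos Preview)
The paper does not give its own proof of this statement; it is quoted as a result of Nelson~\cite{Ne18} and used as a black box. So there is no paper-proof to compare against, but your outline is indeed essentially Nelson's argument: reduce to echelon form, view the maximal minors as integer polynomials in $r(n-r)\le n^2/4$ variables, and bound the number of realisable supports via a zero-pattern count.

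There is, however, a genuine gap in your treatment of the field-uniformity issue. The R\'onyai--Babai--Ganapathy argument shows that if zero patterns $S_1,\ldots,S_r$ are realised by points $u_1,\ldots,u_r$ in a \emph{single} field $k$, then the witnesses $g_{S_j}=\prod_{i\in S_j}f_i$ are linearly independent over $k$; the proof evaluates the $g_{S_j}$ at the $u_{j'}$ to obtain a triangular matrix. When the $u_j$ live in fields of different characteristics there is no common ring in which these evaluations make sense, so the triangular-matrix argument does not assemble into a single linear-independence statement. Your sentence ``linear independence is preserved by base change to any larger ring'' goes the wrong way: passing from $\Z$ (or $\Q$) to $\F_p$ is a quotient, not an inclusion, and can destroy linear independence, while linear independence over each individual $k_j$ does not by itself force linear independence over $\Z$.

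Nelson handles exactly this point, and his refinement is what the present paper later records as Theorem~\ref{theorem:Nelson}: the extra hypothesis $\|f_i\|\le c$ on the integer coefficients is used to bound the number of characteristics that can contribute a new zero pattern, leading to the multiplicative factor $\log(3r)+N\log(c(eN)^d)$ rather than the bare binomial. Your plan becomes correct once you invoke that strengthened statement (with $c=1$ for maximal minors) in place of the raw R\'onyai--Babai--Ganapathy bound.
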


Combining these two estimates, we obtain:

\begin{theorem}\label{theorem:not-representableM}
Asymptotically 100\% of all matroids are not representable over any field.
\end{theorem}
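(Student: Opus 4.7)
The plan is to derive the theorem directly from Knuth's estimate (Theorem~\ref{theorem:numberofmatroids}) and Nelson's estimate by comparing their growth rates. By definition, ``asymptotically $100\%$ of matroids are not representable'' means precisely that $r_n / m_n \to 0$ as $n \to \infty$, so the proof reduces to showing that the upper bound on $\log r_n$ is dominated by the lower bound on $\log m_n$ for large $n$.

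First I would put the two estimates on a common footing. Knuth's inequality $\log \log m_n \geq n - (3/2)\log n - O(1)$ exponentiates to $\log m_n \geq c \cdot 2^n / n^{3/2}$ for some constant $c > 0$ and all sufficiently large $n$. Nelson's bound, by contrast, gives $\log r_n \leq n^3/4$, which is merely polynomial in $n$.

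Then I would compare: $\log(r_n / m_n) = \log r_n - \log m_n \leq n^3/4 - c \cdot 2^n / n^{3/2}$, and the right-hand side tends to $-\infty$ because any singly exponential function of $n$ eventually overwhelms any polynomial in $n$. Hence $r_n / m_n \to 0$, which is exactly the statement of the theorem.

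There is essentially no obstacle here; the theorem is an immediate corollary of the two quoted estimates. All the genuine content lies in the two ingredients themselves (Knuth's construction, which produces doubly exponentially many matroids, and Nelson's combinatorial bound, which shows that representable matroids form only a singly exponential family), and the role of the present proof is merely to observe that a doubly exponential quantity dwarfs a singly exponential one.
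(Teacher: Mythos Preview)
Your proposal is correct and is exactly the approach the paper takes: the paper simply states that Theorem~\ref{theorem:not-representableM} follows by ``combining these two estimates'' (Knuth's and Nelson's), and you have spelled out that combination correctly. There is nothing more to add.
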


One can generalize the classical Grassmannian varieties by the Lagrangian orthogonal Grassmannians $OG(n, 2n)$, which parameterise all $n$-dimensional maximal isotropic subspaces of a given $2n$-dimensional vector space endowed with a symmetric, non-degenerate bilinear form. The combinatorial counterpart of this point of view is the notion of a \emph{Lagrangian orthogonal matroid}, also known as an {\em even Delta-matroid}~\cite{Bo89}. For simplicity, we omit the adjective `Lagrangian' and refer to such objects as \emph{orthogonal matroids}. 

\begin{defi}
Denote by $X\Delta Y$ the symmetric difference of two sets $X,Y$. An {\emph{orthogonal matroid}} on $E$ is a nonempty collection $\cB$ of subsets of $E$ satisfying the following axiom: 
if $B_1, B_2 \in \cB$, then for any $x_1 \in B_1 \Delta B_2$, there exists an element $x_2 \in B_2 \Delta B_1$ with $x_2 \ne x_1$ such that $B_1 \Delta\{x_1, x_2\}  \in \cB$. 
\end{defi}

Like the usual Grassmannian, the Lagrangian orthogonal Grassmannian $OG(n, 2n)$ is also a projective variety cut out by homogeneous quadratic polynomials, known in this case as the {\emph{Wick equations}} \cite{Ri12,We93} (see equations \eqref{eq:Wick} below for a precise formulation). 
The simplest Wick equations have precisely four non-zero terms.

\medskip

By analogy with Theorem~\ref{theorem:G-P}, we will prove:

\begin{theorem}\label{theorem:Wick}
Let $k$ be a field.\footnote{In \S\ref{section:Wick}, we will generalize this fact, and the statement of Theorem~\ref{theorem:Wick}, to partial fields $k$.} 
Let $n \in \N$, set $E = [n]$ and $N = 2^n - 1$, and consider the projective space $\bfP^N(k)$ with coordinates indexed by the subsets of $[n]$.
The following are equivalent for a point $p = (p_J) \in \bfP^N(k)$.

\begin{enumerate}
\item There exists a skew-symmetric matrix $A$ over $k$ with rows and columns indexed by $E$ and a subset $T \subseteq E$ such that $p_J = \Pf(A_{J \Delta T})$ for all $J \subseteq E$. 
\textup{(}Here $A_J$ denotes the $|J| \times |J|$ square submatrix whose sets of row and column indices are both $J$, and $\Pf$ denotes the Pfaffian of a skew-symmetric matrix.\textup{)}
\item The point $p$ satisfies the Wick equations. 
\item The support of $p$ is the set of bases of an orthogonal matroid on $E$, and $p$ satisfies the $4$-term Wick equations. 
\end{enumerate}
\end{theorem}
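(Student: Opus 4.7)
The plan is to mirror the architecture of the proof of Theorem~\ref{theorem:G-P}, with the spinor embedding of the Lagrangian orthogonal Grassmannian $OG(n,2n)$ playing the role of the Plücker embedding, and orthogonal matroids playing the role of ordinary matroids. I would organize the argument as the cycle (1)$\Rightarrow$(2)$\Rightarrow$(3)$\Rightarrow$(1), handling (2)$\Rightarrow$(3) and (3)$\Rightarrow$(2) in parallel with the matroid case.

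The implication (1)$\Rightarrow$(2) is a direct verification that Pfaffians of principal submatrices of a skew-symmetric matrix satisfy the Wick equations. This is a classical Pfaffian identity (essentially the Cartan--Chevalley description of the half-spin representation) with integer coefficients, so it transports to any partial field just as Laplace expansion does in the Plücker case. The twist by a subset $T$ corresponds geometrically to changing the ``standard'' maximal isotropic subspace relative to which spinor coordinates are computed, and this swap preserves the Wick equations up to a relabeling and a sign.

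For (2)$\Rightarrow$(3), the $4$-term Wick equations are themselves Wick equations, so (3) follows once we show that $\mathcal{B} := \Supp(p)$ satisfies the orthogonal exchange axiom. Given $B_1, B_2 \in \mathcal{B}$ and $x_1 \in B_1\Delta B_2$, I would choose a Wick equation in which the monomial $X_{B_1} X_{B_2}$ appears, and whose other monomials all have the form $X_{B_1\Delta\{x_1,x_2\}} X_{B_2\Delta\{x_1,x_2\}}$ with $x_2 \in B_2\Delta B_1$ and $x_2 \neq x_1$. Vanishing of the whole Wick equation forces some such $x_2$ to yield a pair of nonzero coordinates, producing the required basis.

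The crux is (3)$\Rightarrow$(1). Assuming $\Supp(p)$ is an orthogonal matroid and $p$ satisfies the $4$-term Wick equations, the strategy is to derive \emph{all} Wick equations from the $4$-term ones. Paralleling the classical (3)$\Rightarrow$(2) of Theorem~\ref{theorem:G-P}, I would induct on the length $|S \Delta T|$ of the Wick equation, expressing a long Wick equation as a $k$-linear combination of $4$-term Wick equations scaled by a coordinate $p_J$ with $J \in \Supp(p)$; the orthogonal exchange axiom is what guarantees the existence of such a $J$. With all Wick equations in hand, (1) follows from the classical (and partial-field) characterization of the spinor variety as their common zero locus. The principal obstacle is the bookkeeping in this induction: the sign conventions in the Wick equations are significantly more delicate than in the Plücker case because symmetric differences alter the parity of subsets, and care is required both to ensure that the $4$-term reductions produce the correct overall signs and to identify, within each long Wick equation, an appropriate partition into shorter Wick equations compatible with the exchange property.
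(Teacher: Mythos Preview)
Your architecture for $(1)\Rightarrow(2)$ and $(2)\Rightarrow(3)$ matches the paper's: both steps are handled by citing Wenzel's Pfaffian identity (Proposition~\ref{proposition:Pfaffian}) and then reading the symmetric exchange axiom off a suitable Wick equation (Proposition~\ref{proposition:supp}), with the twist by $T$ used to reduce to the normal case $p_\emptyset=1$.

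For the hard direction $(3)\Rightarrow(1)$, however, the paper takes a different and more concrete route than the one you sketch. You propose to first establish $(3)\Rightarrow(2)$ by an induction on the length $|J_1\Delta J_2|$ of a general Wick equation, expressing long equations as combinations of $4$-term ones, and then to invoke the classical description of the spinor variety for $(2)\Rightarrow(1)$. The paper instead goes directly to $(1)$: after twisting so that $p_\emptyset=1$, it \emph{defines} the matrix by $a_{ij}=p_{\{i,j\}}$ and proves $p_J=\Pf(A_J)$ by induction on $|J|$. The $4$-term Wick equation with $J_1\Delta J_2=\{i_1,i_2,i_3,i_4\}\subseteq J$ expresses $p_{J'}\,p_J$ (for a suitable $J'\subsetneq J$) in terms of products of smaller-index Pfaffians already handled by induction, and the only combinatorial input needed is the elementary Lemma~\ref{lemma:normal}: in a normal orthogonal matroid every nonempty basis contains a strictly smaller basis. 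The non-basis case is dispatched by a two-step Pfaffian expansion.

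Both strategies are viable, but the trade-offs differ. The paper's induction on $|J|$ piggybacks on the recursive definition of the Pfaffian, so the sign bookkeeping you flag as the ``principal obstacle'' largely evaporates: one only ever manipulates a single $4$-term relation at a time, and the recursion does the rest. Your induction on $|J_1\Delta J_2|$ is closer in spirit to a syzygy argument for the Wick ideal; it should go through, but to make it work over a partial field you will still need the twisting trick (to guarantee a nonzero pivot coordinate of the right shape at each step), and you will in effect be reproving Wenzel's $(2)\Rightarrow(1)$ along the way. In short: your plan is sound, but the paper's direct construction of $A$ is both shorter and sidesteps exactly the sign difficulties you anticipate.
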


In particular, a point $p \in \bfP^{N}(k)$ belongs to $OG(n, 2n)$ if and only if there is a subset $T \subseteq E$ such that $\Supp(p) \Delta T$ is the set of bases of an orthogonal matroid and $p$ satisfies the $4$-term Wick relations. 

\medskip

If $M$ is an orthogonal matroid, we say that $M$ is {\em representable} over $k$ if there is a skew-symmetric matrix $A$ over $k$ with rows and columns indexed by $E$ and a subset $T \subseteq E$ such that $p_J = \Pf(A_{J \Delta T})$ for all $J \subseteq E$. 

\medskip

By analogy with Nelson's theorem, we estimate the number of representable orthogonal matroids and show:

\begin{theorem}\label{theorem:not-representableOM}
Asymptotically 100\% of orthogonal matroids are not representable over any field.
\end{theorem}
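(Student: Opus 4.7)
The plan is to mirror the proof of Theorem~\ref{theorem:not-representableM}: establish a doubly exponential lower bound for the number of orthogonal matroids on $[n]$ and a singly exponential upper bound for the number that are representable, so that their ratio tends to $0$.

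For the lower bound, I would first observe that every ordinary matroid on $[n]$ is automatically an orthogonal matroid. Indeed, given $B_1, B_2 \in \cB$ and $x_1 \in B_1 \Delta B_2$, if $x_1 \in B_1 \setminus B_2$ then the strong exchange axiom yields $x_2 \in B_2 \setminus B_1$ with $B_1 \Delta \{x_1,x_2\} = (B_1 \setminus \{x_1\}) \cup \{x_2\} \in \cB$; if $x_1 \in B_2 \setminus B_1$, applying strong exchange with $B_1$ and $B_2$ swapped furnishes $x_2 \in B_1 \setminus B_2$ with $B_1 \Delta \{x_1,x_2\} = (B_1 \setminus \{x_2\}) \cup \{x_1\} \in \cB$. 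Since isomorphisms of matroids and of orthogonal matroids on $[n]$ are both induced by permutations of $[n]$, distinct isomorphism classes of matroids remain distinct as orthogonal matroids. Hence the number of isomorphism classes of orthogonal matroids on $[n]$ is at least $m_n$, and Theorem~\ref{theorem:numberofmatroids} gives a $\log\log$ lower bound of $n - \tfrac{3}{2}\log n - O(1)$.

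For the upper bound, I would inject labelled representable orthogonal matroids on $[n]$ into pairs $(M_0, T)$, where $M_0$ is a labelled representable matroid on $[2n]$ and $T \subseteq [n]$. For each representable $M$, fix a representation by a skew-symmetric $n \times n$ matrix $A$ over some field together with a subset $T \subseteq [n]$, and take $M_0$ to be the rank-$n$ matroid on $[2n]$ represented by $[I_n \mid A]$. Expanding $\det([I_n \mid A]_B)$ along the identity columns, one finds that for a size-$n$ subset of the special form $B = ([n] \setminus J) \cup (J + n)$ the resulting minor equals $\pm \det(A_J) = \pm \Pf(A_J)^2$; consequently $M_0$ determines the collection $\{J \subseteq [n] : \Pf(A_J) \ne 0\}$, which is exactly the orthogonal matroid coming from $(A, \emptyset)$, and twisting by $T$ then recovers $M$. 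Passing from labelled to isomorphism counts (at the cost of at most $O(n \log n)$ in the log) and combining with Nelson's estimate $\log r_{2n} \leq (2n)^3/4$ yields that $\log$ of the number of isomorphism classes of representable orthogonal matroids on $[n]$ is $O(n^3)$.

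Dividing, the proportion of orthogonal matroids on $[n]$ that are representable is at most $2^{O(n^3)} / 2^{2^{n - O(\log n)}}$, which tends to $0$. The most delicate step is the minor identity $\det([I_n \mid A]_B) = \pm \Pf(A_J)^2$ for $B = ([n] \setminus J) \cup (J+n)$, together with the convention $\Pf(A_J) = 0$ when $|J|$ is odd (consistent with the fact that all bases of an orthogonal matroid share the parity of $|T|$); both are classical but must be stated carefully so that $M_0$ genuinely encodes the untwisted orthogonal matroid of $A$ and hence the map $M \mapsto (M_0, T)$ is indeed injective regardless of the choice of representation.
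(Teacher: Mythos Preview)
Your argument is correct, and the lower bound is handled exactly as in the paper (matroids are orthogonal matroids, then invoke Knuth). The upper bound, however, is obtained by a genuinely different route.

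The paper bounds the number of representable \emph{normal} orthogonal matroids directly: it writes down the family of sub-Pfaffians $\{\Pf(A_J)\}_{J \subseteq [n]}$ of a generic skew-symmetric matrix and applies Nelson's zero-pattern theorem (the combinatorial lemma behind his matroid result) to these polynomials, obtaining at most $2^{n^3}$ zero patterns. It then multiplies by $2^n$ to account for the twist $T$. Your approach instead treats Nelson's matroid bound as a black box: you embed a representation $(A,T)$ of an orthogonal matroid into the ordinary matroid of $[I_n \mid A]$ on $[2n]$, using the identity $\det([I_n \mid A]_{([n]\setminus J)\cup(J+n)}) = \pm\det(A_J) = \pm\Pf(A_J)^2$ to see that this matroid already records the Pfaffian support. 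This is a nice shortcut---it avoids re-running the zero-pattern estimate and makes the reduction to the matroid case transparent---while the paper's approach is more self-contained and gives a slightly sharper explicit constant. Both yield $\log(\text{upper bound}) = O(n^3)$, which is all that is needed.

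One phrasing point: your sentence ``the map $M \mapsto (M_0,T)$ is injective regardless of the choice of representation'' is slightly imprecise, since the map itself depends on the chosen representation. What you actually establish (and what suffices) is that $(M_0,T)$ determines $M$, so that \emph{any} choice of representation for each $M$ yields an injection into $\{\text{representable matroids on }[2n]\}\times 2^{[n]}$.
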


\section{Representations of Matroids over Partial Fields}\label{section:G-P}

Partial fields are generalizations of fields which have proven to be very useful for studying representability of matroids. 
They were originally introduced by Semple and Whitte~\cite{SW96}, but the definitions below are from~\cite{PvZ13}.\footnote{In \cite{BL21}, one finds a slightly different definition of partial fields from the one in \cite{PvZ13} (there is an additional requirement that $G$ generates $R$ as a ring), but the difference is irrelevant for our purposes in this paper.}

\begin{defi}
A {\emph{partial field}} $P$ is a pair $(G, R)$ consisting of a commutative ring $R$ with $1$ and a subgroup $G$ of the group of units of $R$ such that $-1$ belongs to $G$. 
We say $p$ is an {\emph{element}} of $P$ and write $p \in P$ if $p \in G \cup \{0\}$. 
\end{defi}

\begin{example}
A partial field with $G = R \backslash \{0\}$ is the same thing as a field. 
\end{example}

\begin{example}
The partial field $\F^{\pm}_1 = (\{1, -1\}, \Z)$ is called the {\emph{regular partial field}}. 
\end{example}

\begin{defi}
Let $P = (G, R)$ be a partial field. A {\emph{strong $P$-matroid}} of rank $r$ on $E = [n]$ is a projective solution $p = (p_J) \in {\bfP}^N(P)$ to the Pl{\"u}cker equations \eqref{eq:GP}, i.e., $p_J \in P$ for all $J \in \binom{E}{r}$, not all $P_J$ are zero, and $p$ satisfies \eqref{eq:GP} viewed as equations over $R$.  
A {\emph{weak $P$-matroid}} of rank $r$ on $E = [n]$ is a projective solution $p = (p_J)$ to the $3$-term Pl{\"u}cker equations \eqref{eq:3termGP} such that 
$\Supp(p) := \{J \; \vert \; p_J \neq 0 \}$ is the set of bases of a matroid of rank $r$ on $E$.
\end{defi}

\begin{remark}
Let $P$ be a partial field. If $M$ is a strong or weak $P$-matroid, corresponding to a Pl{\"u}cker vector $p \in {\bfP}^N(P)$, then $\Supp(p) := \{J \; \vert \; p_J \neq 0 \}$ is the set of bases of a matroid 
$\underline{M}$ on $E$, called the {\em underlying matroid} of $M$. 
We say that a matroid $\underline{M}$ is {\emph{$P$-representable}} (or {\emph{representable over $P$}}) if it is the support of a strong (or, equivalently, by the following theorem, weak) $P$-matroid. 
\end{remark}

The following is a generalization of Theorem~\ref{theorem:G-P} to partial fields:

\begin{theorem}\label{theorem:partialfieldG-P}
Let $P = (G, R)$ be a partial field and let $0 \leq r \leq n \in \N$. Let $N = \binom{n}{r} - 1$. Then the following are equivalent for a nonzero point $p = (p_J) \in \bfP^N(P)$: 
\begin{enumerate}
\item There exists a matrix $A$ with entries in $P$ such that $p_J = \det(A_J)$ for all $J \in \binom{[n]}{r}$. 
\item $p$ satisfies the Pl{\"u}cker equations \eqref{eq:GP}, i.e., $p$ is the Pl{\"u}cker vector of a strong $P$-matroid. 
\item $p$ is the Pl{\"u}cker vector of a weak $P$-matroid. 
\end{enumerate}
\end{theorem}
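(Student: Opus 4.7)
The plan is to prove the cycle $(1) \Rightarrow (2) \Rightarrow (3) \Rightarrow (1)$. The implication $(1) \Rightarrow (2)$ is immediate: the Pl\"ucker relations are universal polynomial identities in the maximal minors of an $r \times n$ matrix over any commutative ring, so they hold in $R$ for $p_J := \det(A_J)$. For $(2) \Rightarrow (3)$, the $3$-term Pl\"ucker equations are a subset of the Pl\"ucker equations, so it suffices to check that $\Supp(p)$ is the set of bases of a matroid. I would reduce modulo any maximal ideal $\fm$ of $R$: since each nonzero $p_J$ lies in $G$ and is therefore a unit, its image $\bar p_J$ in the residue field $R/\fm$ is nonzero, so $\Supp(\bar p) = \Supp(p)$. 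Applying Theorem~\ref{theorem:G-P} to $\bar p$ over the field $R/\fm$ then identifies $\Supp(p)$ as the basis set of a matroid.

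For the main implication $(3) \Rightarrow (1)$, I would carry out the explicit construction of a representing matrix. Choose a basis $B \in \Supp(p)$ and, using that $p_B \in G$ is a unit in $R$, rescale so that $p_B = 1$. Define an $r \times n$ matrix $A$ over $R$ by requiring its columns indexed by $B$ to form the $r \times r$ identity matrix and, for $j \notin B$ and $i \in B$, setting
\[
A_{i,j} := \pm\, p_{(B \setminus \{i\}) \cup \{j\}},
\]
with signs dictated by the $\sign$ convention in \eqref{eq:GP}. All entries of $A$ lie in $G \cup \{0\}$, so $A$ is a valid matrix over $P$. It remains to show $\det(A_J) = p_J$ for every $J \in \binom{[n]}{r}$. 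I would induct on $d := |J \setminus B|$: the cases $d \in \{0, 1\}$ reduce to the identity minor and a single cofactor expansion, respectively. For $d \geq 2$, I would select an auxiliary $r$-subset $L$ and a $3$-term Pl\"ucker relation \eqref{eq:3termGP} that expresses $p_J \cdot p_L$ as a $\Z$-linear combination of products $p_{J'} \cdot p_{J''}$ with $|J' \setminus B|,\, |J'' \setminus B| < d$. The same polynomial identity holds for the minors of $A$ (by $(1) \Rightarrow (2)$), so the inductive hypothesis yields $\det(A_J) \cdot p_L = p_J \cdot p_L$ in $R$.

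The main obstacle is the inductive step, specifically the need to arrange a $3$-term Pl\"ucker relation whose auxiliary coordinate $p_L$ is a \emph{unit} in $R$ (equivalently, $L \in \Supp(p)$), so that the cancellation $\det(A_J) = p_J$ is legitimate inside the commutative ring $R$ rather than merely in a field of fractions. This is where the hypothesis that $\Supp(p)$ is a matroid is essential: the strong exchange axiom for the underlying matroid $\underline{M}$ is used to exhibit, for each $J$ with $|J \setminus B| \geq 2$, a compatible $(S, T)$ in \eqref{eq:3termGP} such that every Pl\"ucker coordinate that multiplies $p_J$ or appears on the other side of the reduction lies in $\Supp(p)$ and hence in $G$. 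Producing such a configuration uniformly in $J$ is the combinatorial heart of the argument; once this is in place, the induction closes and the cycle of implications is complete.
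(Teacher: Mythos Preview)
Your outline for $(3)\Rightarrow(1)$ matches the paper's: build $A$ with an identity block on a fixed basis $B$ and signed Pl\"ucker coordinates in the remaining columns, then prove $\det(A_J)=p_J$ by induction on $|J\setminus B|$. Your $(2)\Rightarrow(3)$ via reduction modulo a maximal ideal is a clean way to justify what the paper states in a remark and then treats as ``by definition''.

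There is, however, a genuine gap in your inductive step. You propose to use the strong exchange axiom to produce, for \emph{every} $J$ with $|J\setminus B|\geq 2$, a $3$-term relation whose coefficient $p_L$ on $p_J$ is a unit. But strong exchange is a statement about pairs of \emph{bases}; when $J\notin\Supp(p)$ you cannot invoke it with $J$, and indeed no such $3$-term relation need exist. Concretely, it can happen that for every choice of $a,b\in J\setminus B$ and $i,j\in B\setminus J$ the set $(J\setminus\{a,b\})\cup\{i,j\}$ fails to be a basis, so no candidate $L$ lies in $\Supp(p)$. (A minor related overstatement: even in the basis case you do not need the coordinates ``on the other side'' to lie in $\Supp(p)$---only the coefficient $p_L$ of $p_J$ must be a unit; the other factors are handled by induction whether or not they vanish.)

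The paper closes this gap with a separate argument when $J$ is not a basis and no such $i,j$ exist. One first checks, via a single basis exchange from the hypothetical basis $(J\setminus\{a\})\cup\{i\}$ toward $B$, that the failure of the Case~1 configuration forces $(J\setminus\{a\})\cup\{i\}\notin\Supp(p)$ for every $i\in B\setminus J$, hence $\det(A_{(J\setminus\{a\})\cup\{i\}})=0$ by induction. One then applies a \emph{full} (not $3$-term) Pl\"ucker relation---available because $(1)\Rightarrow(2)$ already holds for the constructed matrix $A$---with $S=B\cup\{a\}$ and $T=J\setminus\{a\}$: every term other than $\pm\det(A_B)\det(A_J)$ either has a repeated index or is one of the vanishing determinants just identified, so $\det(A_J)=0=p_J$. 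This recourse to the longer Pl\"ucker relations for the matrix $A$ is the idea missing from your sketch.
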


\begin{proof}
It follows from standard properties of determinants for matrices over commutative rings that (1) implies (2), and (2) implies (3) is true by definition. It remains to show that (3) implies (1).
The idea is that given $p: \binom{[n]}{r} \to P$ whose support is the set of bases of a matroid $M$, we will explicitly construct an $r \times n$ matrix $A$ over $P$ such that $\det(A_J) = p(J)$ for all $r$-element subsets $J \subseteq [n]$. 

Without loss of generality, by relabeling the elements of $E$ and rescaling the projective vector $p$ if necessary, we may assume that $[r] = \{1, 2, \dots, r\}$ is a basis of $M$ and that $p([r]) = 1$. 
We define 
\[
A = (I_r \; \vert \; a_{ij})_{1 \leq i \leq n, r+1 \leq j \leq n}, 
\]
where $a_{ij} = (-1)^{r+i}p([r] \backslash \{i\} \cup \{j\})$. We claim that $\det(A_J) = p(J)$ for all $J \in \binom{[n]}{r}$. The proof is by induction on $v_J := r - |J \cap [r]|$. 

If $v_J = 0$, then $J = [r]$ and $\det(A_J) = p_J = 1$. If $v_J = 1$, $J = [r] \backslash \{i\} \cup \{j\}$ for some $i \in [r]$ and $j \in [n] \backslash [r]$, and elementary properties of determinants give
\[
\det(A_J) = (-1)^{i+r}a_{ij} = p(J). 
\]

Now suppose $v_J = l \geq 2$. Then $k := |J \cap [r]| \leq r - 2$. Fix $a < b \in J \backslash [r]$. We wish to show that $\det(A_J) = p(J)$. 

\medskip

{\bf{Case 1. }}Suppose $J$ is a basis of $M$. Then by the basis exchange property, there exists $i \in [r] \backslash J$ such that $B' := J \backslash \{a\} \cup \{i\}$ is a basis. 
By the basis exchange property again, there exists $j \in [r] \backslash B'$ such that $B'' := B' \backslash \{b\} \cup \{j\}$ is also a basis. 
Without loss of generality, we may assume that $i < j$, and then applying the $3$-term Grassmann-Pl{\"u}cker relations to $S = J \backslash \{b\} \cup \{i, j\}$ and $T = J \backslash \{a\}$ (so that $S \backslash T = \{i < j < a\}$), we obtain 
\[
p(S \backslash \{i\}) p(T \cup \{i\}) - p(S \backslash \{j\}) p(T \cup \{j\}) + p(S \backslash \{a\}) p(T \cup \{a\}) = 0. 
\]
Since 
\[
|(S \backslash \{i\}) \cap [r]| = |(T \cup \{i\}) \cap [r]| = |(S \backslash \{j\}) \cap [r]| = |(T \cup \{j\}) \cap [r]| = k+1
\]
and $|(S \backslash \{a\}) \cap [r]| = k+2$, the inductive hypothesis implies that 
\[
\det(A_{S \backslash \{i\}}) \det(A_{T \cup \{i\}}) - \det(A_{S \backslash \{j\}}) \det(A_{T \cup \{j\}}) + \det(A_{S \backslash \{a\}}) p(T \cup \{a\}) = 0. 
\]
Moreover, since $S \backslash \{a\} = B''$ is a basis, $p(S \backslash \{a\}) = \det(A_{S \backslash \{a\}}) \ne 0$. This gives $p(T \cup \{a\}) = \det(A_{T \cup \{a\}})$ as desired. 

\medskip

{\bf{Case 2. }}Suppose $J$ is not a basis of $M$, i.e., $p(J)=0$. Note that if there exist distinct $i, j \in [r] \backslash J$ such that $J \backslash \{a, b\} \cup \{i, j\}$ is a basis, then the proof from Case 1 still works. 
Therefore, we may assume that no such $i$ and $j$ exist. Then for every $i \in [r] \backslash J$, $J_i' :=  J \backslash \{a\} \cup \{i\}$ is not a basis and $\det(A_{J_i'}) = p(J_i') = 0$. 
By $(1) \Rightarrow (2)$ applied with $S = [r] \cup \{a\}$ and $T = J \backslash \{a\}$, we have 
\begin{equation} \label{eq:PluckerA}
\begin{split}
\sign(a; S, T) \det(A_{[r]}) \det(A_J) & + \sum_{i \in [r] \cap J} \sign(i; S, T) \det(A_{S \backslash\{i\}}) \det(A_{T \cup \{i\}}) \\
& + \sum_{i \in [r] \backslash J}\sign(i; S, T)\det(A_{S \backslash\{i\}}) \det(A_{T \cup \{i\}})\\
& = 0. 
\end{split}
\end{equation}

If $i \in [r] \cap J$, then $T \cup \{i\} = T$ so $\det(A_{T \cup \{i\}}) = 0$. If $i \in [r] \backslash J$, then $T \cup \{i\} = J_i'$ so $\det(A_{J_i'}) = 0$. Together with \eqref{eq:PluckerA},
this forces $\det(A_J) = 0 = p_J$. 
\end{proof}

We now explain briefly how to see that Theorem~\ref{theorem:not-representableM} (Nelson's theorem) implies that asymptotically 100\% of all matroids are not representable over any partial field.

\begin{defi}
Let $P_1 = (G_1, R_1)$ and $P_2 = (G_2, R_2)$ be partial fields. A map $\varphi: R_1 \to R_2$ is called a {\emph{homomorphism}} of partial fields if $\varphi$ is a ring homomorphism and $\varphi(G_1) \subseteq G_2$. 
\end{defi}

Matroid representability over partial fields is preserved by homomorphisms. More precisely:

\begin{prop}[Semple-Whittle~\cite{SW96}]
Let $\varphi: P_1 \to P_2$ be a homomorphism of partial fields. If a matroid $M$ is $P_1$-representable, then $M$ is also $P_2$-representable. 
\end{prop}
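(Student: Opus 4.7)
The plan is to apply $\varphi$ entrywise to a matrix representing $M$ over $P_1$ and verify that the resulting matrix represents $M$ over $P_2$. Concretely, since $M$ is $P_1$-representable, Theorem~\ref{theorem:partialfieldG-P} furnishes an $r \times n$ matrix $A$ with entries in $P_1 = G_1 \cup \{0\}$ such that the Pl\"ucker vector $p_J := \det(A_J) \in P_1$ satisfies $\Supp(p) = \cB(M)$. I would then define $A'$ to be the matrix obtained by applying $\varphi$ to each entry of $A$; since $\varphi(G_1) \subseteq G_2$ and $\varphi(0) = 0$, the entries of $A'$ lie in $P_2 = G_2 \cup \{0\}$.

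Next I would check that $A'$ gives a $P_2$-representation whose underlying matroid is $M$. Because $\det$ is a polynomial in the matrix entries with integer coefficients and $\varphi$ is a ring homomorphism, we have $\det(A'_J) = \varphi(\det(A_J)) = \varphi(p_J)$ for every $J \in \binom{[n]}{r}$. Setting $p'_J := \varphi(p_J)$ thus produces a candidate Pl\"ucker vector over $P_2$, and by Theorem~\ref{theorem:partialfieldG-P} (direction $(1) \Rightarrow (2)$ applied to $A'$) it automatically satisfies the Pl\"ucker equations \eqref{eq:GP}.

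The one substantive point — and really the only thing that uses the definition of a partial field homomorphism — is to show that $\Supp(p') = \Supp(p) = \cB(M)$. For this, note that each $p_J$ lies in $G_1 \cup \{0\}$. If $p_J = 0$ then $p'_J = \varphi(0) = 0$. Conversely, if $p_J \in G_1$ then $\varphi(p_J) \in G_2$; since every element of $G_2$ is by definition a unit of $R_2$ and hence nonzero, we have $p'_J \ne 0$. Therefore $\Supp(p') = \Supp(p) = \cB(M)$, and we conclude that $M$ is $P_2$-representable, as desired.

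There is no real obstacle here: the argument is a bookkeeping verification whose content is that ring homomorphisms preserve polynomial identities (giving the Pl\"ucker equations for free) while partial-field homomorphisms preserve the zero/nonzero dichotomy (giving preservation of the support). The only point demanding a little care is the final one — that $\varphi$ does not send a nonzero element of $P_1$ to zero in $P_2$ — and this is exactly what the hypothesis $\varphi(G_1) \subseteq G_2$ buys us, since elements of $G_2$ are units.
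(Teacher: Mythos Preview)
Your proof is correct. The paper does not actually supply its own proof of this proposition---it is stated with attribution to Semple--Whittle~\cite{SW96} and left unproved. That said, the paper \emph{does} prove the analogous statement for orthogonal matroids (the lemma in Section~\ref{section:how-many}), and its argument there is identical in spirit to yours: apply $\varphi$ entrywise to the representing matrix, use that $\varphi$ is a ring homomorphism to push it through the Pfaffian (here, the determinant), and observe that $\varphi$ preserves the zero/nonzero dichotomy because $\varphi(G_1) \subseteq G_2 \subseteq R_2^\times$. So your approach is exactly the one the authors would have written had they included a proof.
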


On the other hand, we also have: 

\begin{lemm} \label{lemma:homomtofield}
If $P=(G,R)$ is a partial field, there exists a homomorphism $P \to k$ for some field $k$.
\end{lemm}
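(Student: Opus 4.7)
The plan is to reduce this to the familiar fact that every nonzero commutative ring admits a ring homomorphism to a field. First I would unpack the definition of a partial field homomorphism in the case where the target is a field $k$: regarding $k$ as the partial field $(k^\times, k)$, a map $\varphi \colon (G, R) \to (k^\times, k)$ is just a ring homomorphism $\varphi \colon R \to k$ satisfying $\varphi(G) \subseteq k^\times$. This inclusion is automatic, since every element of $G$ is a unit in $R$ and ring homomorphisms preserve units: if $g \cdot g^{-1} = 1$ in $R$, then $\varphi(g)\,\varphi(g^{-1}) = 1$ in $k$, forcing $\varphi(g) \in k^\times$.

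Thus the task becomes producing any ring homomorphism from $R$ to some field. For this I would invoke Zorn's lemma to obtain a maximal ideal $\fm \subseteq R$, set $k := R/\fm$, and let $\pi \colon R \to R/\fm$ be the canonical surjection; then $k$ is a field and, by the observation above, $\pi$ automatically defines a homomorphism of partial fields $(G, R) \to (k^\times, k)$. The only point requiring comment is that $R$ must be nonzero, i.e.\ $1 \neq 0$ in $R$. This is the standard nondegeneracy assumption built into the notion of a partial field in practice; the alternative $R = 0$ is vacuous, since then $\bfP^N(P)$ is empty and there are no matroids to represent.

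There is no real obstacle here: the content of the lemma is essentially the observation that the partial-field compatibility condition $\varphi(G) \subseteq G'$ degenerates to a tautology when the target partial field is an honest field, after which the statement reduces to a routine application of Zorn's lemma.
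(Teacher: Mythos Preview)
Your proof is correct and follows essentially the same approach as the paper: take a maximal ideal $\fm \subseteq R$, set $k := R/\fm$, and observe that the quotient map $R \twoheadrightarrow k$ is automatically a partial field homomorphism. The paper's argument is terser (it omits your explicit verification that units map to units and the remark about $R \neq 0$), but the content is identical.
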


\begin{proof}
Take a maximal ideal $\fm \subseteq R$ and consider the field $k := R/\fm$. Then the natural quotient homomorphism $R \twoheadrightarrow k$ induces a homomorphism of partial fields $P \to k$.
\end{proof}

We deduce:

\begin{coro}
If a matroid is representable over a partial field $P$, then it is representable over a field $k$. 
\end{coro}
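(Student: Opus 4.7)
The corollary is essentially a one-line consequence of the two results immediately preceding it, so my plan is to simply compose them. Given a matroid $M$ representable over the partial field $P$, I would invoke Lemma~\ref{lemma:homomtofield} to produce a field $k$ together with a homomorphism of partial fields $\varphi : P \to k$. Then I would apply the Semple--Whittle proposition to the homomorphism $\varphi$ to conclude that $M$, being $P$-representable, is also $k$-representable.

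There is really no obstacle here beyond naming the two ingredients. The only thing worth being careful about is confirming that the notion of ``field $k$'' being used in the Semple--Whittle proposition lines up with the target of the lemma, which it does: a field is the special case of a partial field with $G = R \setminus \{0\}$, so the quotient map $R \twoheadrightarrow R/\mathfrak{m}$ from the lemma is indeed a homomorphism of partial fields in the required sense, and the image matroid is representable in the usual classical sense (which, by Theorem~\ref{theorem:partialfieldG-P}, coincides with representability over $k$ as a partial field). Thus the proof is a two-step chain with no case analysis needed.
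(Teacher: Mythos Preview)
Your proposal is correct and matches the paper's approach exactly: the paper states this corollary with the prefatory phrase ``We deduce,'' indicating that it follows immediately from combining Lemma~\ref{lemma:homomtofield} with the Semple--Whittle proposition, precisely as you outline.
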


Combining this fact with Theorem~\ref{theorem:not-representableM}, we obtain:

\begin{coro}
Asymptotically 100\% of all matroids are not representable over any partial field.
\end{coro}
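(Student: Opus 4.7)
The plan is to combine the preceding corollary with the counting estimates in Theorem~\ref{theorem:numberofmatroids} and Nelson's theorem stated just after it. Write $m_n$ for the total number of isomorphism classes of matroids on $[n]$ and $r_n$ for the number of such classes that are representable over some field. Let $\rho_n$ denote the number of isomorphism classes of matroids on $[n]$ that are representable over some partial field. The preceding corollary asserts that every partial-field-representable matroid is field-representable, so $\rho_n \leq r_n$.

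Applying the bounds $\log\log m_n \geq n - \tfrac{3}{2}\log n - O(1)$ and $\log r_n \leq n^3/4$ (valid for $n \geq 12$), I obtain
\[
\frac{\rho_n}{m_n} \leq \frac{r_n}{m_n} \longrightarrow 0
\]
as $n \to \infty$, which is precisely the assertion that asymptotically $100\%$ of matroids on $[n]$ fail to be representable over any partial field. The main obstacle here is essentially non-existent: the argument is immediate once one has the preceding corollary (which in turn reduces to Lemma~\ref{lemma:homomtofield} together with the Semple--Whittle pushforward proposition) and the two asymptotic estimates already quoted in the introduction.
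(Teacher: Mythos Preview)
Your proof is correct and follows essentially the same approach as the paper: the paper simply states that the corollary follows by combining the preceding corollary (partial-field representability implies field representability) with Theorem~\ref{theorem:not-representableM}, and you have just unpacked the latter into its two constituent estimates (Knuth's lower bound on $m_n$ and Nelson's upper bound on $r_n$).
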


\section{Representations of Orthogonal Matroids}\label{section:OM}

In this section, we provide some background on orthogonal matroids and their representations.

\medskip

Recall from Section~\ref{section:introduction} that an {\em orthogonal matroid} on $E$ is a nonempty collection $\cB$ of subsets of $E$ satisfying the symmetric exchange axiom. 
As with their matroid counterparts, this axiom can be replaced with an {\em a priori} stricter {\emph{strong symmetric exchange axiom}} (see, for example, Theorem 4.2.4 of~\cite{BGW03}): 

\begin{prop}[Strong Symmetric Exchange]
If $M$ is an orthogonal matroid, then for any $B_1, B_2 \in \cB$ and $x_1 \in B_1 \Delta B_2$, there exists $x_2 \in \cB$ with $x_2 \ne x_1$ such that both $B_1 \Delta \{x_1, x_2\}$ and $B_2 \Delta \{x_1, x_2\}$ belong to $\cB$. 
\end{prop}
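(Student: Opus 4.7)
I would derive the strong symmetric exchange axiom from the (weak) exchange axiom by induction on $d := |B_1 \Delta B_2|$. One first verifies that $d$ is always even, since every application of the weak axiom changes $|B|$ by $0$ or $\pm 2$, so all bases of an orthogonal matroid have the same parity of cardinality. The base case $d = 2$ is immediate: $B_1 \Delta B_2 = \{x_1, x_2\}$ forces $x_2$ uniquely, and $B_1 \Delta \{x_1, x_2\} = B_2$, $B_2 \Delta \{x_1, x_2\} = B_1$ both lie in $\cB$.

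For the inductive step $d \geq 4$, I would fix $x_1 \in B_1 \Delta B_2$ and apply the weak axiom twice: to $(B_1, B_2, x_1)$, producing $y \neq x_1$ in $B_1 \Delta B_2$ with $B_1' := B_1 \Delta \{x_1, y\} \in \cB$, and to $(B_2, B_1, x_1)$, producing $z \neq x_1$ in $B_1 \Delta B_2$ with $B_2' := B_2 \Delta \{x_1, z\} \in \cB$. If either $B_2 \Delta \{x_1, y\} \in \cB$ or $B_1 \Delta \{x_1, z\} \in \cB$, we are done. Otherwise $y \neq z$, since $B_1 \Delta \{x_1, y\} \in \cB$ while $B_1 \Delta \{x_1, z\} \notin \cB$.

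The key next step is the observation that $|B_1 \Delta B_2'| = d - 2 < d$ and $x_1 \in B_1 \Delta B_2'$, so the inductive hypothesis applied to $(B_1, B_2', x_1)$ yields $w \in B_1 \Delta B_2'$, $w \neq x_1$, satisfying $B_1 \Delta \{x_1, w\} \in \cB$ and $B_2' \Delta \{x_1, w\} = B_2 \Delta \{z, w\} \in \cB$ (here $w \neq z$ since $B_1 \Delta B_2' = (B_1 \Delta B_2) \setminus \{x_1, z\}$). The goal then reduces to showing $B_2 \Delta \{x_1, w\} \in \cB$, for then $x_2 = w$ works.

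The main obstacle is this final promotion from $B_2 \Delta \{z, w\}$ to $B_2 \Delta \{x_1, w\}$. The plan is to apply the inductive hypothesis one final time, this time to the triple $(B_1, B_2 \Delta \{z, w\}, x_1)$, whose symmetric-difference size is again $d - 2$ (since $z, w \in B_1 \Delta B_2$ are distinct), and to run a short case analysis on the produced exchange element $v$: in each case, the output either directly yields $B_2 \Delta \{x_1, w\} \in \cB$ or forces a contradiction with our standing assumption that strong exchange fails for $(B_1, B_2, x_1)$. The delicate point is that the weak axiom is inherently one-sided while the strong axiom is two-sided, so transferring information about exchanges in $B_1$ to matching exchanges in $B_2$ requires threading several instances of the weak axiom together while keeping $x_1$ in play as the active element throughout. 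This mirrors the inductive argument sketched in Theorem 4.2.4 of~\cite{BGW03}.
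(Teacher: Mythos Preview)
The paper does not prove this proposition; it merely cites Theorem~4.2.4 of~\cite{BGW03}. That said, your proposed induction has a genuine gap at what you call ``the key next step.'' You assert $x_1 \in B_1 \Delta B_2'$ and then invoke the inductive hypothesis for the triple $(B_1, B_2', x_1)$, but in the very same sentence you (correctly) compute $B_1 \Delta B_2' = (B_1 \Delta B_2) \setminus \{x_1, z\}$, which shows $x_1 \notin B_1 \Delta B_2'$. Indeed, $B_2' = B_2 \Delta \{x_1, z\}$ flips the membership of $x_1$, so $x_1 \in B_1 \Delta B_2$ forces $x_1 \notin B_1 \Delta B_2'$. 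You therefore cannot apply strong exchange to $(B_1, B_2', x_1)$ with $x_1$ as the active element, and the element $w$ on which the remainder of your argument relies is never produced.

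This is exactly the obstruction you allude to in your last paragraph: once an exchange removes $x_1$ from the symmetric difference, a naive induction loses track of it. Among $\{B_1, B_1', B_2, B_2'\}$, the only pair with smaller symmetric difference that still contains $x_1$ is $(B_1', B_2')$, but induction there only yields $B_1 \Delta \{y, w\} \in \cB$ and $B_2 \Delta \{z, w\} \in \cB$, neither of which involves $x_1$, and the promotion back to $B_i \Delta \{x_1, w\}$ is not forced by what you have. The argument in~\cite{BGW03} navigates this with a more careful case analysis; your sketch as written does not close the gap.
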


\begin{example}
Every matroid is also an orthogonal matroid. In fact, matroids are by definition just orthogonal matroids whose bases all have the same cardinality. 
\end{example}

From the definition, one sees easily that any two bases of an orthogonal matroid have the same parity. 

\begin{example}\label{exm:representable}
Let $E = [4]$ and consider $\cB = \{\emptyset, \{1, 2\}, \{1, 4\}, \{2, 4\}\}$. This is an orthogonal matroid. If we keep the same ground set $E$ and replace every member $B$ of $\cB$ with $B \Delta \{3\}$, we obtain another orthogonal matroid $M' = (E, \cB')$ where $\cB' = \{\{3\}, \{1, 2, 3\}, \{1, 3, 4\}, \{2, 3, 4\}\}$. This is an example of a general operation on orthogonal matroids called {\em twisting}.
\end{example}

The determinant of a matrix $A$ admits a refinement for skew-symmetric matrices called the {\em Pfaffian}. 
The Pfaffian $\Pf(A)$ can be defined recursively as follows. 

\medskip

By convention, we define the Pfaffian of the empty matrix to be $1$. 
Now let $A$ be an $n \times n$ skew-symmetric matrix over a ring $R$, where $n \geq 1$. If $n$ is odd, we set $\Pf(A) := 0$. 
If $n = 2$, we have
\[
\Pf
\begin{pmatrix}
0 & a \\
-a & 0
\end{pmatrix}
 := a. 
\]

Finally, if $n \geq 4$, we set
\[
\Pf(A) := \sum_{j = 2}^n (-1)^j \Pf(A_{\{1, j\}}) \Pf(A_{\{2, \dots, j-1, j+1, \dots, n\}}),
\]
where $A_J$ denotes the $|J| \times |J|$ square submatrix whose rows and columns are indexed by $J$.

\medskip

If $A$ is a $(2k) \times (2k)$ skew-symmetric matrix of indeterminates, $\Pf(A)$ is a homogeneous polynomial of degree $d = 2k-1$ whose coefficients all belong to $\{ 0,1,-1 \}$. A basic fact about the Pfaffians is that $(\Pf(A))^2 = \det(A)$ for every skew-symmetric matrix $A$~\cite{Ca49}. 

\begin{prop}[Wenzel, Prop. 2.3 of \cite{We93}] \label{proposition:Pfaffian}
Let $A$ be an $n \times n$ skew-symmetric matrix over a ring $R$. Let $N = 2^n - 1$. The point $(p_J)_{J \subseteq E} \in \bfP^N(R)$ defined by $p_J = \Pf(A_J)$ satisfies the homogeneous quadratic polynomial equations
\begin{equation} \label{eq:Wick}
\sum_{j = 1}^k (-1)^j \cdot X_{J_1 \Delta\{i_j\}} \cdot X_{J_2 \Delta \{i_j\}} = 0,
\end{equation}
called the {\em Wick equations}\footnote{These identities are known to physicists as Wick's theorem~\cite{Mu99}. We follow \cite{Ri12} rather than \cite{We93} in our terminology.}, for all
$J_1, J_2 \subseteq [n]$ and $J_1 \Delta J_2 = \{i_1 < \cdots < i_k\}$.
\end{prop}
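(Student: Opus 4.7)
My plan has two main ingredients: a reduction to a polynomial identity via universality, followed by a verification of that identity using the realization of the Pfaffian vector as an exponential in the exterior algebra.

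For the reduction, observe that by the recursive definition of $\Pf$, each $\Pf(A_K)$ is a polynomial with integer coefficients in the upper-triangular entries $a_{pq}$ of $A$. Hence, for each fixed pair $J_1, J_2$, the Wick expression $\sum_j (-1)^j \Pf(A_{J_1 \Delta \{i_j\}}) \Pf(A_{J_2 \Delta \{i_j\}})$ is a polynomial in $\Z[a_{pq} : p < q]$, and the desired identity is the vanishing of this polynomial. It therefore suffices to verify the identity after base change to the fraction field $\Q(a_{pq})$, so we may assume $R$ is a field of characteristic zero.

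For the exterior-algebra argument, let $V = R^n$ with basis $e_1, \ldots, e_n$, write $e_J := e_{j_1} \wedge \cdots \wedge e_{j_s}$ for $J = \{j_1 < \cdots < j_s\} \subseteq [n]$, and associate to $A$ the $2$-form $\omega_A := \sum_{p<q} a_{pq}\, e_p \wedge e_q \in \Lambda^2 V$. A direct expansion of $\omega_A^k / k!$ as a signed sum over perfect matchings yields the key identity
\[
\Omega_A \;:=\; \exp(\omega_A) \;=\; \sum_{J \subseteq [n]} \Pf(A_J)\, e_J \;\in\; \Lambda^{\mathrm{even}} V,
\]
so that the entire vector of Pfaffians is realized as an exponential in $\Lambda^{*} V$. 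Since the interior product $\iota_v$ is an anti-derivation, one obtains $\iota_v \Omega_A = (\iota_v \omega_A) \wedge \Omega_A$ for every $v \in V$; iterating this identity and matching coefficients of suitable monomials in the tensor square $\Omega_A \otimes \Omega_A$ yields the Wick relations for every choice of $J_1, J_2$. Conceptually, $\Omega_A$ is a \emph{pure spinor} in the Clifford-algebra sense, and the Wick identities are precisely its Cartan purity relations.

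The principal technical obstacle is sign bookkeeping. The factors $(-1)^j$ in the Wick relation arise from the anti-commutativity of successive interior products together with the reorderings relating $e_{J \Delta \{i_j\}}$ to $e_J$, and must be carefully reconciled with the sign conventions implicit in the Pfaffian expansion of $\Omega_A$. An equally sign-intensive but more elementary alternative is a purely combinatorial proof: expand each Pfaffian as a signed sum over perfect matchings and construct a sign-reversing involution on the resulting configurations that pairs off the terms of the Wick sum. Either way, reconciling the signs is the heart of the argument.
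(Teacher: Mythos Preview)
The paper does not supply its own proof of this proposition; it is stated with a citation to Wenzel~\cite{We93} and used as a black box. So there is no argument in the paper to compare against, and your task is simply to give a self-contained proof.

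Your reduction step is fine, and the identity $\exp(\omega_A)=\sum_{J}\Pf(A_J)\,e_J$ together with $\iota_v\Omega_A=(\iota_v\omega_A)\wedge\Omega_A$ are both correct and useful. The gap is in the sentence ``iterating this identity and matching coefficients of suitable monomials in the tensor square $\Omega_A\otimes\Omega_A$ yields the Wick relations.'' As written, the identity $\iota_v\Omega_A=(\iota_v\omega_A)\wedge\Omega_A$ is \emph{linear} in $\Omega_A$, with the other side involving the matrix entries $a_{pq}$ rather than Pfaffians; iterating it produces more relations of the same type, not bilinear relations among the $\Pf(A_J)$ alone. You have not said what operator on $\Lambda V\otimes\Lambda V$ you are applying, nor why it annihilates $\Omega_A\otimes\Omega_A$, so the passage to the Wick relations is unjustified.

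What actually works along your lines is this: set $\Psi:=\sum_{i=1}^n\bigl(e_i\wedge\,\otimes\,\iota_{e_i}\;-\;\iota_{e_i}\,\otimes\,e_i\wedge\bigr)$ acting on $\Lambda V\otimes\Lambda V$. Using your derivation identity, $\iota_{e_i}\Omega_A=\sum_j a_{ij}\,e_j\wedge\Omega_A$, one computes
\[
\Psi(\Omega_A\otimes\Omega_A)=\sum_{i,j}a_{ij}\bigl(e_i\wedge\Omega_A\otimes e_j\wedge\Omega_A - e_j\wedge\Omega_A\otimes e_i\wedge\Omega_A\bigr)=0
\]
by the skew-symmetry of $A$. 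Now extract the coefficient of $e_{J_1}\otimes e_{J_2}$ on the left: only those $i\in J_1\Delta J_2$ contribute (for $i\in J_1\cap J_2$ or $i\notin J_1\cup J_2$ the two terms cancel or both vanish), and after the sign bookkeeping you correctly flag as the main labor, one obtains exactly~\eqref{eq:Wick}. You should make this operator explicit and carry out the coefficient extraction; invoking ``pure spinor'' and ``Cartan purity relations'' names the phenomenon but does not substitute for the computation.
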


We are especially interested in the shortest possible Wick equations, where $|J_1 \Delta J_2| = 4$, called the {\emph{$4$-term Wick equations}}. 
Concretely, if $J \subseteq [n]$ and $a < b < c < d \in [n] \backslash J$ are distinct, we have: 
\begin{gather*}
X_{Jabcd}X_{J} - X_{Jab}X_{Jcd} + X_{Jac}X_{Jbd} - X_{Jad}X_{Jbc} = 0, \\
X_{Jabc}X_{Jd} - X_{Jabd}X_{Jc} + X_{Jacd}X_{Jb} - X_{Jbcd}X_{Ja} = 0. 
\end{gather*}

Here, and from now on, $Ja$ means $J \cup \{a\}$ in order to simplify the notation. 

\medskip 

If $P$ is a partial field, we may consider the projective solutions in $\bfP^N(P)$ to the different kinds of Wick equations. 

\begin{defi}
A {\emph{strong orthogonal matroid}} over $P$ is a projective solution $p = (p_J)$ to the Wick equations \eqref{eq:Wick}. 
A {\emph{weak orthogonal matroid}} over $P$ is a projective solution $p = (p_J)$ to $4$-term Wick equations whose support $\Supp(p) = \{J \; \vert \; p_J \neq 0 \}$ is the set of bases of an orthogonal matroid on $E$.
\end{defi}

\begin{remark}
One can generalize the definition of weak and strong orthogonal matroids over $P$ from partial fields to tracts in the sense of~\cite{BB19} and obtain cryptomorphic descriptions of these objects along the lines of {\em loc. cit.}, see~\cite{JK23}.
\end{remark}

\begin{prop}[Wenzel, Theorem 2.2 of \cite{We93}] \label{proposition:normalOM}
Let $P = (G, R)$ be a partial field. Given a strong orthogonal matroid over $P$ with $p_\emptyset = 1$, there exists an $n \times n$ skew-symmetric matrix $A$ over $R$ such that $\Pf(A_J) = p_J$. 
\end{prop}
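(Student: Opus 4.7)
The plan is to construct $A$ explicitly from the putative Pfaffians $(p_J)$ and then verify $\Pf(A_J) = p_J$ by induction on $|J|$, in close analogy with the matrix construction in the proof of Theorem~\ref{theorem:partialfieldG-P}. Since $A$ must be skew-symmetric with $\Pf(A_{\{i,j\}}) = a_{ij}$ for $i<j$, the only reasonable definition is
\[
a_{ij} := p_{\{i,j\}} \text{ for } i<j, \qquad a_{ji} := -a_{ij}, \qquad a_{ii} := 0,
\]
and each $a_{ij}$ lies in $R$ since $p_{\{i,j\}} \in P \subseteq R$.

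The heart of the argument is a single family of Wick relations: for $J = \{j_1 < j_2 < \cdots < j_s\} \subseteq E$ with $s \geq 1$, set $J_1 := \{j_1\}$ and $J_2 := J \setminus \{j_1\}$. Then $J_1 \Delta J_2 = J$, and \eqref{eq:Wick} unpacks to
\[
-p_\emptyset \, p_J \;+\; \sum_{m=2}^{s}(-1)^m p_{\{j_1, j_m\}} \, p_{J \setminus \{j_1, j_m\}} \;=\; 0,
\]
the $m=1$ term contributing $-p_\emptyset p_J$ because $J_1 \Delta \{j_1\} = \emptyset$ and $J_2 \Delta \{j_1\} = J$. Using $p_\emptyset = 1$, this rearranges to the identity
\[
p_J \;=\; \sum_{m=2}^{s}(-1)^m p_{\{j_1, j_m\}} \, p_{J \setminus \{j_1, j_m\}}. \quad (\ast)
\]

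I would then prove $\Pf(A_J) = p_J$ by strong induction on $|J|$. The cases $|J| \in \{0, 2\}$ are immediate from $p_\emptyset = 1$ and the definition of $a_{ij}$, and the case $|J| = 1$ follows from $(\ast)$ applied to $J = \{x\}$, which collapses to $p_{\{x\}} = 0 = \Pf(A_{\{x\}})$. For odd $|J| = s \geq 3$, each $m \geq 2$ term in $(\ast)$ contains the factor $p_{J \setminus \{j_1, j_m\}}$ of odd size $s-2 < s$, which vanishes by induction, so $p_J = 0 = \Pf(A_J)$. For even $|J| = 2k \geq 4$, the Pfaffian recursion combined with the induction hypothesis yields
\[
\Pf(A_J) \;=\; \sum_{m=2}^{2k}(-1)^m \Pf(A_{\{j_1, j_m\}}) \Pf(A_{J \setminus \{j_1, j_m\}}) \;=\; \sum_{m=2}^{2k}(-1)^m p_{\{j_1, j_m\}} \, p_{J \setminus \{j_1, j_m\}},
\]
which equals $p_J$ by $(\ast)$.

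The main obstacle is identifying the correct Wick relation among the many available; once one recognizes that taking $J_1$ a singleton and $J_2$ its complement in $J$ reproduces the Pfaffian recursion exactly — with the pivot term $-p_\emptyset \, p_J = -p_J$ cleanly isolating $p_J$ — the induction runs uniformly across all sizes and dispatches the parity issue for odd $|J|$ as a free byproduct.
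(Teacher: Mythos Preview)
Your proof is correct. The choice of $J_1 = \{j_1\}$ and $J_2 = J \setminus \{j_1\}$ is exactly right: it makes the Wick relation coincide term-by-term with the Pfaffian recursion, and the pivot $-p_\emptyset p_J$ isolates $p_J$ cleanly. The odd-size case falls out of the same induction, as you observe.

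The paper itself does not supply a proof of this proposition; it is quoted from Wenzel~\cite{We93}. The closest internal argument is the proof of $(3)\Rightarrow(1)$ in Theorem~\ref{theorem:normalWick}, which establishes the stronger claim that the \emph{4-term} Wick relations alone (together with the orthogonal-matroid support condition) suffice. That argument also defines $a_{ij} = p_{\{i,j\}}$, but the inductive step is different: rather than invoking the full $|J|$-term Wick relation, it uses Lemma~\ref{lemma:normal} to locate a basis $J' \subseteq J$ with $|J'| = |J|-4$ and then applies a 4-term relation with pivot $p_{J'} \neq 0$. Your approach is shorter and avoids the combinatorial lemma, at the cost of using the long Wick relations---which is legitimate here since the hypothesis is that $p$ is a \emph{strong} orthogonal $P$-matroid. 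The paper's route, by contrast, is tailored to the weak hypothesis and therefore cannot appeal to relations longer than four terms.
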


\begin{prop}\label{proposition:supp}
Let $P$ be a partial field. The support of every strong orthogonal matroid over $P$ is the set of bases of an orthogonal matroid. 
\end{prop}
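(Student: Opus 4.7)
The plan is to verify directly the symmetric exchange axiom for $\cB := \Supp(p)$ using a single well-chosen Wick equation. Given $B_1, B_2 \in \cB$ and an element $x_1 \in B_1 \Delta B_2$, my strategy is to set $J_1 := B_1 \Delta \{x_1\}$ and $J_2 := B_2 \Delta \{x_1\}$. A short computation gives $J_1 \Delta J_2 = B_1 \Delta B_2$, so in particular $x_1 \in J_1 \Delta J_2$, and I can write $J_1 \Delta J_2 = \{i_1 < \cdots < i_k\}$ with $x_1 = i_{j_0}$.

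Next, I would invoke the Wick equation indexed by $(J_1, J_2)$, which reads
\[
\sum_{j=1}^{k} (-1)^j \, p_{J_1 \Delta \{i_j\}} \, p_{J_2 \Delta \{i_j\}} = 0
\]
in the ring $R$. The $j = j_0$ term simplifies to $(-1)^{j_0} p_{B_1} p_{B_2}$, because $J_1 \Delta \{x_1\} = B_1$ and $J_2 \Delta \{x_1\} = B_2$. Since both $p_{B_1}$ and $p_{B_2}$ belong to the group $G$ of units of $R$, their product is itself a unit and in particular nonzero. Therefore some other term of the sum must be nonzero, i.e., there exists $j \neq j_0$ with $p_{J_1 \Delta \{i_j\}} \neq 0$. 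Setting $x_2 := i_j$, we have $x_2 \in B_1 \Delta B_2 = B_2 \Delta B_1$, $x_2 \neq x_1$, and $B_1 \Delta \{x_1, x_2\} = J_1 \Delta \{x_2\} \in \cB$, which is exactly the conclusion of the symmetric exchange axiom.

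The essential point where the partial field hypothesis (rather than merely an arbitrary ring with a distinguished subset of ``nonzero'' elements) enters is the claim that $p_{B_1} p_{B_2} \neq 0$: without the requirement that $G$ lies inside the \emph{units} of $R$, a product of nonzero elements could vanish and the other Wick terms could sum to zero trivially, breaking the argument. Beyond this observation I do not anticipate any real obstacle; the proof is a direct extraction of the exchange axiom from the Wick identity, reminiscent of how the 3-term Pl\"ucker relations furnish basis exchange for matroids. As a minor consistency check, in the degenerate case $|B_1 \Delta B_2| = 1$ the Wick equation would reduce to $\pm p_{B_1} p_{B_2} = 0$, which is impossible; hence $k \geq 2$ automatically whenever the hypotheses of the axiom are met.
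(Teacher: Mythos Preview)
Your proof is correct and takes a genuinely different route from the paper's. The paper first handles the case $p_\emptyset = 1$ by citing an external result (Theorem~3.3 of Wenzel~\cite{We93}), and then reduces the general case to this one via twisting: it replaces $p$ by $q$ with $q_J = p_{J \Delta T}$ for some $T \in \Supp(p)$, observes that $q$ still satisfies the Wick equations with $q_\emptyset = 1$, and then notes that $\Supp(p) = \Supp(q) \Delta T$ is a twist of an orthogonal matroid. Your approach, by contrast, verifies the symmetric exchange axiom directly from a single Wick equation, with no reduction step and no appeal to Wenzel. This is more elementary and entirely self-contained; as a bonus, since the nonzero term you locate has \emph{both} factors nonzero, your argument actually yields the \emph{strong} symmetric exchange axiom ($B_2 \Delta \{x_1, x_2\} \in \cB$ as well) at no extra cost. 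The paper's route, while less direct, showcases the twisting technique that recurs elsewhere (e.g., in the reduction of Theorem~\ref{theorem:PFWick} to the normal case).
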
 

\begin{proof}
If $p_\emptyset = 1$, then $\Supp(p)$ gives an orthogonal matroid by Theorem 3.3 of \cite{We93}. Otherwise, let $p \in {\bfP^N(P)}$ be the Wick vector of a strong orthogonal matroid over $P$ (i.e., a point of the Lagrangian orthogonal Grassmannian $OG(n, 2n)$ over $P$), and choose $T \ne \emptyset$ such that $p_T = 1$. 
Consider the point $q \in \bfP^N(P)$ whose coordinates are defined by $q_J = p_{J \Delta T}$. We claim that $q$ also satisfies the Wick equations. 
In fact, for any $J_1, J_2 \subseteq [n]$ with $J_1 \Delta J_2 = (J_1 \Delta T) \Delta (J_2 \Delta T) = \{i_1 < \cdots < i_k\}$, we have 
\[
\begin{split}
\sum_{j = 1}^k (-1)^j \cdot q_{J_1 \Delta \{i_j\}} \cdot q_{J_2 \Delta \{i_j\}} & = \sum_{j = 1}^k (-1)^j \cdot p_{J_1 \Delta \{i_j\} \Delta T} \cdot q_{J_2 \Delta \{i_j\} \Delta T}\\
& = \sum_{j = 1}^k (-1)^j \cdot p_{(J_1 \Delta T) \Delta \{i_j\}} \cdot p_{(J_2 \Delta T) \Delta \{i_j\}} \\
& = 0. 
\end{split}
\]
Since $\emptyset \in \Supp(q)$, this gives an orthogonal matroid $M$ with set of bases $\Supp(p) \Delta T$. Therefore, $\Supp(p)$ is the set of bases for the twist $M \Delta T$. 
\end{proof}

\begin{defi}
Let $M$ be an orthogonal matroid, and let $P$ be a partial field. Then $M$ is {\emph{$P$-representable}} if there exists a skew-symmetric matrix $A = (a_{ij})_{i, j \in E}$ with entries in $P$ and a subset $T \subseteq E$ such that 
\[
\cB \Delta T := \{B \Delta T \; \vert \; B \in \cB\} = \{J \subseteq E \; \vert \; \Pf(A_J) \neq 0 \}. 
\]
\end{defi}

\begin{example}
The two orthogonal matroids in Example~\ref{exm:representable} are both $\R$-representable. To see this, let 
\[
A = 
\begin{pmatrix}
0 & -3 & 0 & 1 \\
3 & 0 & 0 & 6 \\
0 & 0 & 0 & 0 \\
-1 & -6 & 0 & 0 \\
\end{pmatrix}. 
\]

Then $\{J \subseteq [4] \; \vert \; \Pf(A_J) \in \R^\times\} = \{\emptyset, \{1, 2\}, \{1, 4\}, \{2, 4\}\}$. 
Using the same notation from Example~\ref{exm:representable}, we find that $\cB = \cB' \Delta \{3\} = \{\emptyset, \{1, 2\}, \{1, 4\}, \{2, 4\}\}$. 
\end{example}

\begin{remark}
In general, one can choose $T  = \emptyset$ in the representation if and only if $\emptyset$ is a basis. In this case, we say that the orthogonal matroid is {\emph{normal}}. 
\end{remark}

\section{Orthogonal Matroids and Orthogonal Grassmannians}\label{section:Wick}

Let $P$ be a partial field and let $N = 2^n - 1$. Our goal for this section is to connect the notions of strong orthogonal matroids, weak orthogonal matroids, and representable matroids over $P$.  

We begin with the following lemma on normal orthogonal matroids. 

\begin{lemm}\label{lemma:normal}
Suppose $M$ is a normal orthogonal matroid. If $\emptyset \ne J \subseteq [n]$ is a basis, there exists another basis $J' \subseteq J$ with $|J'| = |J| -2$. 
\end{lemm}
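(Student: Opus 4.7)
The plan is to apply the strong symmetric exchange axiom (stated as a Proposition in Section~\ref{section:OM}) directly to the pair of bases $B_1 = J$ and $B_2 = \emptyset$, and read off the desired subset from what the axiom outputs.

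First I would observe two preliminary facts. Since $M$ is normal, $\emptyset \in \cB$, so both $\emptyset$ and $J$ are bases of $M$. As noted in Section~\ref{section:OM}, any two bases of an orthogonal matroid have the same parity; since $|\emptyset| = 0$ is even, $|J|$ is even as well, and because $J \neq \emptyset$ we have $|J| \geq 2$. In particular we can choose some element $x_1 \in J$, and note that $J \Delta \emptyset = J$.

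Now I would apply strong symmetric exchange to $B_1 = J$, $B_2 = \emptyset$, and the chosen element $x_1 \in B_1 \Delta B_2 = J$. This produces an element $x_2 \neq x_1$ lying in $B_1 \Delta B_2 = J$ such that both
\[
B_1 \Delta \{x_1, x_2\} = J \setminus \{x_1, x_2\} \qquad \text{and} \qquad B_2 \Delta \{x_1, x_2\} = \{x_1, x_2\}
\]
belong to $\cB$. Setting $J' := J \setminus \{x_1, x_2\}$ gives a basis with $J' \subseteq J$ and $|J'| = |J| - 2$, which is exactly what is required.

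There is essentially no obstacle here, since the lemma is an immediate consequence of strong symmetric exchange once one exploits the hypothesis that $\emptyset$ is a basis (which forces $B_1 \Delta B_2 = J$, so that the element $x_2$ produced by the axiom automatically lies in $J$ and hence $J \Delta \{x_1, x_2\} \subseteq J$). The only subtlety to flag is that the weak symmetric exchange axiom alone does not suffice: one really needs the strong version in order to conclude that $x_2$ comes from $B_1 \Delta B_2$ rather than from some arbitrary element of the ground set, which is why $J \setminus \{x_1, x_2\}$ is a subset of $J$ and not merely a basis of cardinality $|J| - 2$.
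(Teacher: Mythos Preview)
Your proof is correct and follows the same approach as the paper: apply symmetric exchange to the pair of bases $J$ and $\emptyset$.

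One correction, though: your final remark is mistaken. The weak symmetric exchange axiom (the defining axiom of an orthogonal matroid in the paper) already stipulates that $x_2 \in B_2 \Delta B_1$, and since $B_2 \Delta B_1 = B_1 \Delta B_2 = J$, the weak axiom alone guarantees $x_2 \in J$ and hence $J \Delta \{x_1, x_2\} = J \setminus \{x_1, x_2\} \subseteq J$. The distinction between the weak and strong versions lies elsewhere: the strong version additionally asserts that $B_2 \Delta \{x_1, x_2\} \in \cB$, a conclusion you record but do not use. Indeed, the paper's own proof invokes only the weak axiom.
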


\begin{proof}
We apply the symmetric exchange axiom to the bases $J$ and $\emptyset$. Pick some $a \in J$. Then there exists $a \ne b \in J = J \Delta \emptyset$ such that $J' = J \Delta\{a,b\}$ is a basis, and $|J'| = |J| - 2$. 
\end{proof}

Using this lemma, we now prove the desired result for normal orthogonal matroids.

\begin{theorem}\label{theorem:normalWick}
The followings are equivalent for a point $p \in \bfP^{N}(P)$ with $p_\emptyset = 1$:

\begin{enumerate}
\item There exists a skew-symmetric matrix $A$ over $P$ such that $p_J = \Pf(A_J)$ for all $J \subseteq [n]$. 
\item $p$ is a strong $P$-orthogonal matroid. 
\item $p$ is a weak $P$-orthogonal matroid. 
\end{enumerate}
\end{theorem}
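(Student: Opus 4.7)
The plan is to establish the cycle (1) $\Rightarrow$ (2) $\Rightarrow$ (3) $\Rightarrow$ (1). The implication (1) $\Rightarrow$ (2) is Proposition~\ref{proposition:Pfaffian}, and (2) $\Rightarrow$ (3) follows from Proposition~\ref{proposition:supp} together with the fact that every solution of the full Wick equations automatically satisfies the $4$-term Wick equations. The substantive content is therefore (3) $\Rightarrow$ (1). Mimicking the inductive construction in the proof of Theorem~\ref{theorem:partialfieldG-P}, I would, given $p$ satisfying (3) with $p_\emptyset = 1$, set $\cB := \Supp(p)$ and define a skew-symmetric matrix $A = (a_{ij})$ over $R$ by $a_{ij} := p_{\{i,j\}}$ for $i<j$ (and $a_{ji} := -a_{ij}$). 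It then suffices to show that $\Pf(A_J) = p_J$ for every $J \subseteq [n]$, by strong induction on $|J|$.

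The cases $|J| \leq 2$ follow directly from the definition of $A$, and the case $|J|$ odd is handled at once since both $p_J$ and $\Pf(A_J)$ vanish (all bases of a normal orthogonal matroid share the parity of $\emptyset$). For the main inductive step with $|J|$ even and $|J| \geq 4$, I would split into two cases according to whether there exists $J' \subseteq J$ with $|J'| = |J|-4$ and $J' \in \cB$. In Case A such a $J'$ exists; writing $J \setminus J' = \{a<b<c<d\}$, the first of the two displayed $4$-term Wick equations, applied with base $J'$, gives
\[
p_J\, p_{J'} = p_{J' \cup \{a,b\}}\, p_{J' \cup \{c,d\}} - p_{J' \cup \{a,c\}}\, p_{J' \cup \{b,d\}} + p_{J' \cup \{a,d\}}\, p_{J' \cup \{b,c\}},
\]
and Proposition~\ref{proposition:Pfaffian} yields the same identity with every $p_\bullet$ replaced by $\Pf(A_\bullet)$. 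The induction hypothesis equates the two right-hand sides, so $(\Pf(A_J) - p_J)\, p_{J'} = 0$; dividing by the unit $p_{J'} \in G$ forces $\Pf(A_J) = p_J$, whether or not $J$ itself is a basis.

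In Case B no basis of size $|J|-4$ lies inside $J$. Applying Lemma~\ref{lemma:normal} contrapositively then forces no basis of size $|J|-2$ to sit inside $J$ either (otherwise, applying the lemma to such a basis would produce the forbidden small basis), so $p_{J \setminus \{i,j\}} = 0$ for every two-element $\{i,j\} \subseteq J$, and in particular $p_J = 0$. Expanding $\Pf(A_J)$ along the first row of $A_J$ and using the induction hypothesis on the size-$(|J|-2)$ minors then yields $\Pf(A_J) = \sum_{l \geq 2} (-1)^l p_{\{i_1,i_l\}}\, p_{J \setminus \{i_1,i_l\}} = 0 = p_J$. The main obstacle is recognising the need for this case split: expanding the Pfaffian along a single row produces a Wick relation of length $|J|-1$ rather than a $4$-term one, so the hypothesis does not apply directly; the resolution is to pivot through a single $4$-term Wick equation when a small basis $J' \subseteq J$ of size $|J|-4$ is available, and to exploit Lemma~\ref{lemma:normal} to force the Pfaffian expansion to collapse in the remaining case.
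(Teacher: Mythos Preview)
Your proof is correct and follows essentially the same approach as the paper's: both establish the cycle via Propositions~\ref{proposition:Pfaffian} and~\ref{proposition:supp}, define $A$ by $a_{ij} = p_{\{i,j\}}$, and prove $\Pf(A_J)=p_J$ by induction on $|J|$, splitting according to whether some basis $J'\subseteq J$ of size $|J|-4$ exists and using the $4$-term Wick relation anchored at $J'$ in the affirmative case. Your organization is slightly tighter: you fold the paper's separate treatment of $|J|=4$ and of ``$J$ a basis'' into a single Case~A, and in Case~B you invoke Lemma~\ref{lemma:normal} contrapositively to rule out bases of size $|J|-2$ inside $J$, so a single Pfaffian row expansion already collapses, whereas the paper expands twice to reach size $|J|-4$ directly.
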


\begin{proof}
(1) $\Rightarrow$ (2) follows from Proposition~\ref{proposition:Pfaffian}, and (2) $\Rightarrow$ (3) follows from Proposition~\ref{proposition:supp}. So it suffices to prove that (3) $\Rightarrow$ (1). 

Given a point $p = (p_J) \in\bfP^{N}(P)$ with support equal to the orthogonal matroid $M$ and satisfying the $4$-term Wick equations, let $A = (a_{ij})$ be the skew-symmetric matrix defined by
\[
a_{ij} = p_{\{i,j\}} \in P, \; 1 \leqslant i < j \leqslant n.
\]
We claim that $p_J = \Pf(A_J)$ for all $J \subseteq [n]$. 

When $|J|$ is odd, or $|J| = 0$ or $2$, we clearly have $p_J = \Pf(A_J)$. Now suppose $J = \{i_1, i_2, i_3, i_4\}$, where $1 \leqslant i_1 < i_2 < i_3 < i_4 \leqslant n$. Let $J_1 = \{i_1\}$ and $J_2 = \{i_2, i_3, i_4\}$. Since $p$ satisfies the $4$-term Wick equations, we have 
\[
p_{\emptyset} p_{J} - p_{\{i_1, i_2\}} p_{\{i_3, i_4\}} + p_{\{i_1, i_3\}} p_{\{i_2,i_4\}} - p_{\{i_1,i_4\}} p_{\{i_2,i_3\}} = 0. 
\]
But $p_J = \Pf(A_J)$ when $|J| = 2$. Therefore, by the recursive definition of Pfaffians, $p_J = \Pf(J)$ when $|J| = 4$. 

Now suppose $p_J = \Pf(A_J)$ for bases $J$ with $|J| = 0, 2, 4, \dots, 2r-2$. Let $J = \{i_1, \dots, i_{2r}\}$ be a basis of $M$ with $1 \leqslant i_1 < \dots < i_{2r} \leqslant n$. By Lemma ~\ref{lemma:normal}, there exists another basis $J' \subseteq J$ with $|J'| = 2r -4$. Without loss of generality $J \backslash J'= \{i_1, i_2, i_3, i_4\}$. 

Let $J_1 = \{i_1, i_5, \dots, i_{2r}\}$ and let $J_2 = \{i_2, i_3, \dots, i_{2r}\}$. By the $4$-term Wick relations, we have 
\[
p_{J'}p_{J} - p_{J_1 \cup \{i_2\}}p_{J_2 \backslash \{i_2\}} + p_{J_1 \cup \{i_3\}}p_{J_2 \backslash \{i_3\}} - p_{J_1 \cup \{i_4\}}p_{J_2 \backslash \{i_4\}} = 0. 
\]
Since $p_{J'} \ne 0$, by Proposition~\ref{proposition:Pfaffian} and induction, we obtain that $p_J = \Pf(A_J)$. 

Finally, suppose $J = \{i_1, \dots, i_{2r}\} \subseteq [n]$ is not a basis for $M$. If there exists $\{a, b, c, d\} \subseteq J$ such that $J' = J \backslash \{a,b,c,d\}$ is a basis, then the same proof would apply, giving $\Pf(A_J) = p_J = 0$. 
Otherwise, we have $p_{J'} = 0$ for all $J' \subseteq J$ with $|J'| = |J| - 4$. Therefore, 
\[
\begin{split}
\Pf(A_J) & = \sum_{j = 2}^{2r} (-1)^j \cdot p_{\{i_1, i_j\}} \cdot p_{\{i_2, \dots, \hat{i_j}, \dots, i_{2r}\}} \\
& = \sum_{j = 2}^{2r} (-1)^j \cdot p_{\{i_1, i_j\}} \cdot \left(\sum_{j \ne k = 3}^{2r} (-1)^k \cdot p_{\{i_2, i_k\}} \cdot p_{\{i_3, \dots, i_{2r}\} \backslash \{i_j, i_k\}} \right)\\
& = 0. 
\end{split}
\]
\end{proof}

We now turn to the general case. 

\begin{theorem}\label{theorem:PFWick}
The following are equivalent for arbitrary point $p \in \bfP^{N}(P)$: 
\begin{enumerate}
\item There exists a skew-symmetric matrix $A$ over $P$ such that $p_J = \Pf(A_J)$ for all $J \subseteq [n]$. 
\item $p$ is a strong $P$-orthogonal matroid. 
\item $p$ is a weak $P$-orthogonal matroid 
\end{enumerate}
\end{theorem}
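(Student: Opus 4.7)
The plan is to bootstrap from the normal case (Theorem~\ref{theorem:normalWick}) by twisting, reading condition (1) in the natural way consistent with the definition of representability and with Theorem~\ref{theorem:Wick}: there exist a skew-symmetric matrix $A$ over $P$ and a subset $T \subseteq [n]$ such that $p_J = \Pf(A_{J \Delta T})$ for all $J$. Under this reading, the implications (1) $\Rightarrow$ (2) $\Rightarrow$ (3) are almost immediate. If $p_J = \Pf(A_{J \Delta T})$, set $q_J := p_{J \Delta T}$, so that $q_J = \Pf(A_J)$; Proposition~\ref{proposition:Pfaffian} shows that $q$ satisfies the Wick equations, and the twisting calculation at the end of the proof of Proposition~\ref{proposition:supp} transports them back to $p$, giving (2). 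Proposition~\ref{proposition:supp} then yields (2) $\Rightarrow$ (3).

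The substantive content is the implication (3) $\Rightarrow$ (1). Given a weak $P$-orthogonal matroid $p$, whose support is the basis set of an orthogonal matroid $M$, I would choose any $T \in \Supp(p)$, rescale the projective vector $p$ so that $p_T = 1$, and form $q_J := p_{J \Delta T}$. Then $q_\emptyset = 1$ and $\Supp(q) = \Supp(p) \Delta T$ is the basis set of the twist $M \Delta T$, which is again an orthogonal matroid (twisting respects the symmetric exchange axiom). The essential check is that $q$ satisfies the 4-term Wick equations. The twisting computation in Proposition~\ref{proposition:supp} shows that the relation $\sum_j (-1)^j q_{J_1 \Delta \{i_j\}} q_{J_2 \Delta \{i_j\}} = 0$ coincides with the corresponding relation for $p$ indexed by $(J_1 \Delta T, J_2 \Delta T)$, and the identity $(J_1 \Delta T) \Delta (J_2 \Delta T) = J_1 \Delta J_2$ guarantees that this correspondence preserves the length of the relation, in particular sending 4-term relations to 4-term relations. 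Hence $q$ is a weak $P$-orthogonal matroid with $q_\emptyset = 1$, and Theorem~\ref{theorem:normalWick} supplies a skew-symmetric matrix $A$ over $P$ with $q_J = \Pf(A_J)$. Undoing the twist yields $p_J = q_{J \Delta T} = \Pf(A_{J \Delta T})$, which is (1).

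The principal obstacle is the bookkeeping needed to ensure that twisting preserves the 4-term Wick relations specifically, rather than only the entire family of Wick relations collectively; otherwise the argument would require the full Wick equations for $p$, which are not assumed in (3). As noted, this reduces to the elementary identity $|(J_1 \Delta T) \Delta (J_2 \Delta T)| = |J_1 \Delta J_2|$ together with the substitution already carried out in Proposition~\ref{proposition:supp}. Everything else is a routine reduction to the normal case.
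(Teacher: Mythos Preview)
Your proposal is correct and follows essentially the same route as the paper: twist by a basis $T$ to reduce to the normal case, invoke Theorem~\ref{theorem:normalWick}, and untwist. Your reading of condition~(1) as $p_J = \Pf(A_{J\Delta T})$ is the intended one (this is how the theorem is stated in the introduction as Theorem~\ref{theorem:Wick}; the omission of $T$ in the restatement here is a slip), and you are more explicit than the paper in verifying that twisting carries $4$-term Wick relations to $4$-term Wick relations via $|(J_1\Delta T)\Delta(J_2\Delta T)| = |J_1\Delta J_2|$, a point the paper simply asserts.
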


\begin{proof}
Again, it suffices to prove (3) $\Rightarrow$ (1). Let $p = (p_J)$ be a point in $\bfP^N(P)$ satisfying the $4$-term Wick equations and assume that $\Supp(p)$ is the set of bases of an orthogonal matroid. Pick $p_{T} \ne 0$ and without loss of generality $p_T = 1$. Consider the point $q = (q_J) = (p_{J \Delta T}) \in \bfP^N(P)$. Since $q_\emptyset = 1$ and $q$ satisfies all the $4$-term Wick equations, it follows from Theorem~\ref{theorem:normalWick} that $q$ satisfies {\em all} of the Wick equations. So $p$ satisfies all of the Wick equations as well. 
\end{proof}

\section{The Number of Representable Orthogonal Matroids}\label{section:how-many}

In this section, we establish an upper bound for the number of orthogonal matroids on $[n]$ which are representable over some partial field. For this, we use a theorem of Nelson~\cite{Ne18} concerning zero patterns of a collection of polynomials. 

\medskip

For a polynomial $f \in \Z[x_1, \dots, x_m]$, we write $||f||$ for the maximal absolute value of the coefficients of $f$. In particular, $||0|| = 0$. 
Let $k$ be a field, and let $\psi_k: \Z[x_1, \dots, x_m] \to k[x_1, \dots, x_m]$ be the ring homomorphism induced by the natural homomorphism $\varphi_k: \Z \to k$. 

\medskip

Let $f_1, \dots, f_N \in \Z[x_1, \dots, x_m]$. We say a set $S \subseteq [N]$ is {\emph{realizable}} with respect to $\{f_1, \dots, f_N\}$ if there is a field $k$ and a vector ${\bf{u}} \in k^m$ such that 
\[
S = \{i \in [N] \; \vert \; \psi_k(f_i)({\bf{u}}) \ne 0 \}. 
\]

\begin{theorem}[Nelson]\label{theorem:Nelson}
Let $c, d \in \Z$ and let $f_1, \dots, f_N \in \Z[x_1, \dots, x_m]$ with $\deg(f_i) \leq d$ and $||f_i|| \leq c$ for all $i$. Let $\log$ denote the logarithm to base $2$. If $r$ satisfies 
\[
r > \binom{Nd + m}{m} (\log(3r) + N\log(c(eN)^d)), 
\]
then $\{f_1, \dots, f_N\}$ has at most $r$ realizable sets. 
\end{theorem}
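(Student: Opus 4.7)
The plan is to combine the classical single-field zero-pattern bound of R\'onyai, Babai, and Ganapathy (RBG) with a descent argument that reduces realizations over arbitrary fields to realizations over small finite fields.

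First I would establish (or recall) the single-field bound: for any fixed algebraically closed field $K$, the number of patterns realizable by $f_1,\dots,f_N$ with a witness in $K^m$ is at most $\binom{Nd+m}{m}$. The argument enumerates the patterns $T_1,\dots,T_s$ with witnesses $\mathbf{u}_j \in K^m$, forms $g_j := \prod_{i \in T_j} f_i \in K[x_1,\dots,x_m]$, and observes that $g_j(\mathbf{u}_{j'}) \ne 0$ iff $T_j \subseteq T_{j'}$. After sorting the patterns by non-increasing cardinality, the matrix $[g_j(\mathbf{u}_{j'})]$ is triangular with nonzero diagonal, forcing the $g_j$ to be linearly independent in the space of polynomials of degree at most $Nd$, whose dimension is $\binom{Nd+m}{m}$.

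Next I would descend each realization to a small finite field. For each realizable pattern $S$, fix a witness field $k_S$ and vector $\mathbf{u}_S \in k_S^m$, and consider the finitely generated subring $R_S := \Z[u_{S,1},\dots,u_{S,m}] \subseteq k_S$. Since $R_S$ is a Jacobson ring whose maximal ideals all have finite residue fields, and using prime avoidance against the finite multiplicative set generated by $\{f_i(\mathbf{u}_S) : i \in S\}$, I can locate a maximal ideal $\fm_S \subseteq R_S$ whose residue field $\F_{q_S}$ realizes exactly the same pattern $S$ after taking the image of $\mathbf{u}_S$. Thus, every realizable pattern is actually realized over some finite prime field.

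The hard step is the \emph{effective} bound on the size $q_S$. I would argue that each element $f_i(\mathbf{u}_S) \in R_S$ has an archimedean-like ``height'' controlled by $c(eN)^d$, so that the number of small primes which would destroy the pattern upon reduction is at most $N\log(c(eN)^d)$; consequently every pattern descends to some $\F_{q_S}$ with $q_S \le c(eN)^d$. Granting this, the proof concludes by assuming $r$ realizable patterns and invoking RBG over $\overline{\F_p}$ for each of the $\le N\log(c(eN)^d)$ primes involved: a pigeonhole across these primes and a union bound over the finitely many algebraic closures produces the extra factor $\log(3r)$, yielding $r \le \binom{Nd+m}{m}(\log(3r) + N\log(c(eN)^d))$, which contradicts the hypothesis. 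The main obstacle throughout is the quantitative height estimate and its companion prime-counting bound: these are precisely what force the peculiar self-referential form of the stated inequality, and getting the constants right (the $eN$ and the factor $3$ inside the logarithms) is where most of the technical work lies.
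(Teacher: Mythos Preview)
The paper does not prove this theorem: it is quoted from Nelson~\cite{Ne18} and used as a black box in the subsequent counting of representable orthogonal matroids. There is therefore no ``paper's own proof'' to compare your proposal against.

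As for the proposal itself, the architecture you describe---the R\'onyai--Babai--Ganapathy single-field bound plus a specialization of each witness to finite characteristic---is indeed Nelson's. Steps~1 and~2 are fine. The genuine gap is in Steps~3--4. You assert that ``every pattern descends to some $\F_{q_S}$ with $q_S \le c(eN)^d$,'' but Nelson makes no such claim about the \emph{size} of the residue field; what is actually bounded is the number of distinct \emph{characteristics} that can appear among all the witnesses, and this comes from controlling the height of the nonzero values $f_i(\mathbf u_S)$ after an integral specialization and then counting prime divisors via the logarithm of that height. The term $\log(3r)$ enters through a separate dichotomy (small characteristic versus characteristic zero, the latter handled by specializing to a prime not exceeding a bound depending on $r$), not via the ``union bound over the finitely many algebraic closures'' you describe. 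Your final sentence correctly identifies this quantitative step as the crux, but as written the argument in Step~4 does not yield the stated inequality: the pigeonhole is over characteristics, not over algebraic closures, and the two logarithmic terms count different things. So the outline is right in spirit but the effective core is not yet a proof.
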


\begin{theorem}
If $n \geq 12$, then the number of normal orthogonal matroids on [n] representable over some field is at most $2^{n^3}$. 
\end{theorem}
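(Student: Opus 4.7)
The plan is to invoke Nelson's zero-pattern theorem (Theorem~\ref{theorem:Nelson}) applied to the family of Pfaffians of a generic $n \times n$ skew-symmetric matrix. By Theorem~\ref{theorem:normalWick}, a normal orthogonal matroid $M$ on $[n]$ is representable over a field $k$ precisely when there is a skew-symmetric matrix $A$ over $k$ whose nonzero Pfaffians $\{J : \Pf(A_J) \neq 0\}$ form the basis set of $M$. Since a skew-symmetric $n \times n$ matrix is determined by its $m := \binom{n}{2}$ strict upper-triangular entries, I would introduce indeterminates $\{X_{ij}\}_{1 \leq i < j \leq n}$ and, for each even-cardinality $J \subseteq [n]$, let $f_J \in \Z[X_{ij}]$ be the Pfaffian of the corresponding skew-symmetric submatrix (odd-cardinality $J$ contribute $\Pf \equiv 0$ and are discarded). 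Each $k$-representable normal orthogonal matroid then corresponds to a realizable set of the family $\{f_J\}$ in the sense of Nelson, so an upper bound on the number of realizable sets is an upper bound on the count we seek.

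To apply Theorem~\ref{theorem:Nelson}, I would read off its parameters: the number of polynomials is $N' = 2^{n-1}$; the number of variables is $m = \binom{n}{2}$; the maximum degree is $d = \lfloor n/2 \rfloor$ (since $\Pf$ of a $2k \times 2k$ matrix has degree $k$); and the coefficient bound is $c = 1$, since Pfaffians have $\{-1,0,1\}$-valued coefficients. Setting $r = 2^{n^3}$, the theorem yields the desired bound provided
\[
2^{n^3} \;>\; \binom{N'd+m}{m}\bigl(\log(3 \cdot 2^{n^3}) + N'\log((eN')^d)\bigr).
\]

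The remaining task is an elementary estimation. Taking $\log_2$ of the right-hand side, the dominant contribution is the binomial: using $\log_2 \binom{N'd+m}{m} \leq m\log_2(e(N'd+m)/m)$ together with $N'd \leq n \cdot 2^{n-2}$ and $m \leq n^2/2$, one finds $\log_2 \binom{N'd+m}{m} \leq \tfrac{n^2}{2}(n + O(\log n)) = \tfrac{n^3}{2} + o(n^3)$. The bracketed factor is at most $O(n^2 \cdot 2^n)$, contributing only $O(n)$ further to the logarithm. Hence the $\log_2$ of the whole right-hand side is $\tfrac{n^3}{2} + o(n^3)$, comfortably below $n^3$ for all sufficiently large $n$.

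I expect the main (and only) obstacle to be verifying that the threshold $n \geq 12$ in the hypothesis is already large enough for this asymptotic estimate to be rigorous at the boundary; this should be confirmed by direct numerical substitution.
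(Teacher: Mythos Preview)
Your proposal is correct and follows the same approach as the paper: apply Nelson's zero-pattern theorem to the Pfaffians of a generic skew-symmetric $n\times n$ matrix, with $N'=2^{n-1}$, $m=\binom{n}{2}$, $c=1$, and $r=2^{n^3}$. The only differences are that the paper uses the cruder degree bound $d=n-1$ (versus your sharper $d=\lfloor n/2\rfloor$, which is also fine) and carries out the explicit inequalities confirming the bound already holds at $n=12$---precisely the step you flagged as remaining.
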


\begin{proof}
Let $A$ be an $n \times n$ skew-symmetric matrix of indeterminates, where $a_{i, i} = 0$ and $a_{i, j} = x_{i,j}$ for $i > j$. For ease of notation, we relabel the indeterminates as $x_1, \dots, x_m$, where $m = \frac{n(n-1)}{2}$. 
Let $\{f_1, \dots, f_N\}$ be the Pfaffians of all square submatrices of $A$ of even size, where 
\[
N = \sum_{0 \leq k \leq n, \text{ $k$ is even}}\binom{n}{k} = 2^{n-1}. 
\]
Notice that $\deg f_i \leq n-1$ and $||f_i|| = 1$ for every $i = 1, \dots, N$. 

If $M$ is a $k$-representable normal orthogonal matroid on $[n]$, there exists a skew-symmetric matrix $A$ over $k$ such that $\cB = \{J \subseteq E \; \vert \; A_{J} \text{ is nonsingular}\}$. 
This means precisely that $\cB$ is realizable with respect to $\{f_1, \dots, f_N\}$, so it suffices to show the number of realizable sets for $\{f_1, \dots, f_N\}$ is at most $2^{n^3}$. 

In Theorem~\ref{theorem:Nelson}, let $c = 1, d = n-1, N = 2^{n-1}$, and $r = 2^{n^3}$, where $n \geq 12$. Then by standard inequalities, we have
\[
\begin{split}
\binom{Nd + m}{m} & = \binom{2^{n-1}(n-1) + \frac{n(n-1)}{2}}{\frac{n(n-1)}{2}}\\
& \leq \binom{n \cdot 2^n}{n^2} \\ 
& \leq \left(\frac{e\cdot n \cdot 2^{n-1}}{n^2}\right)^{n^2} \\
& < \left(\frac{2^{n+1}}{n}\right)^{n^2}, 
\end{split}
\]
and 
\[
\begin{split}
\log(3r) + N\log(c(eN)^d) & = \log 3 + n^3 + (n-1)\cdot 2^{n-1}(\log e + n-1) \\
& \leq 2 + n^3 + (n-1)\cdot 2^{n-1}(n+1)\\
& < n^2 \cdot 2^{n-1}. 
\end{split}
\]
Therefore, 
\[
\begin{split}
\binom{Nd + m}{m} (\log(3r) + N\log(c(eN)^d)) & < \left(\frac{2^{n+1}}{n}\right)^{n^2} \cdot n^2 \cdot 2^{n-1} \\
& = 2^{n^3} \cdot \frac{2^{n^2 + n - 1}}{n^{n^2 - 2}} < 2^{n^3} = r. 
\end{split}
\]
By Theorem~\ref{theorem:Nelson}, $\{f_1, \dots, f_N\}$ has at most $r = 2^{n^3}$ realizable sets, so there are at most $2^{n^3}$ normal orthogonal matroids on $[n]$ which are representable over some field.
\end{proof}

\begin{coro}
Asymptotically 100\% of all orthogonal matroids are not representable over any field.
\end{coro}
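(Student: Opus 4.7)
The plan is to combine the upper bound of $2^{n^3}$ on the number of normal field-representable orthogonal matroids on $[n]$ (just established) with Knuth's doubly-exponential lower bound $\log \log m_n \geq n - \tfrac{3}{2}\log n - O(1)$ on the number of matroids (Theorem~\ref{theorem:numberofmatroids}). Since every matroid is in particular an orthogonal matroid (namely, one whose bases all have the same cardinality) and non-isomorphic matroids remain non-isomorphic as orthogonal matroids, the total number of orthogonal matroids on $[n]$ is at least $m_n$, which grows doubly exponentially.

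The remaining task is to bound the number of \emph{all} field-representable orthogonal matroids (not just the normal ones) by something singly exponential in a polynomial in $n$. For this, I would observe that every orthogonal matroid $M$ with at least one basis is the twist $N \Delta B$ of the normal orthogonal matroid $N := M \Delta B$, where $B$ is any chosen basis of $M$ (so that $\emptyset = B \Delta B \in \cB(N)$). If $M$ is representable over a field $k$ via data $(A, T)$, then $N$ is representable over $k$ via $(A, T \Delta B)$, since
\[
\cB(N) \Delta (T \Delta B) = \cB(M) \Delta T = \{J \subseteq [n] : \Pf(A_J) \ne 0\}.
\]
Since the twisting subset can be any of the $2^n$ subsets of $[n]$, this yields an upper bound of $2^n \cdot 2^{n^3} = 2^{n^3 + n}$ on the number of field-representable orthogonal matroids on $[n]$ for $n \geq 12$.

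Putting the two estimates together, the proportion of field-representable orthogonal matroids among all orthogonal matroids on $[n]$ is at most $2^{n^3 + n}/m_n$, which tends to $0$ as $n \to \infty$ since the numerator is singly exponential while the denominator is doubly exponential. I expect no substantial obstacle beyond the twist-bookkeeping in the middle paragraph---verifying that twisting a representation by any subset of $[n]$ again yields a valid representation---after which the argument reduces to a routine comparison of growth rates.
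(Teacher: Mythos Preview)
Your proposal is correct and follows essentially the same approach as the paper: twist a representable orthogonal matroid by any basis to obtain a normal representable one, multiply the $2^{n^3}$ bound by $2^n$ to account for the twist, and compare with Knuth's doubly-exponential lower bound on the number of matroids (which are in particular orthogonal matroids). Your write-up is in fact more careful than the paper's, spelling out why twisting preserves representability and why the matroid count lower-bounds the orthogonal-matroid count.
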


\begin{proof}
Let $k$ be a field and let be a $k$-representable orthogonal matroid. Then for any $T \in \cB$, $M \Delta T$ is a normal representable orthogonal matroid. This shows the number of representable orthogonal matroid on $[n]$ is at most $2^n \cdot 2^{n^3}$. The result now follows from Theorem~\ref{theorem:numberofmatroids}.
\end{proof}

\begin{lemm}
Let $\varphi: P_1 \to P_2$ be a homomorphism of partial fields. If an orthogonal matroid $M$ is $P_1$-representable, then $M$ is also $P_2$-representable. 
\end{lemm}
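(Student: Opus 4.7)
The plan is to take any witness skew-symmetric matrix $A$ over $P_1$ that represents $M$ and push it forward entry-wise through $\varphi$, obtaining a skew-symmetric matrix $\varphi(A)$ over $P_2$ that will represent $M$ there. Concretely, suppose $A = (a_{ij})_{i,j \in E}$ is a skew-symmetric matrix with entries in $P_1$ and $T \subseteq E$ satisfies $\cB \Delta T = \{J \subseteq E : \Pf(A_J) \ne 0\}$. I set $\varphi(A) := (\varphi(a_{ij}))_{i,j \in E}$. Since each $a_{ij}$ lies in $G_1 \cup \{0\}$ and $\varphi(G_1) \subseteq G_2$ with $\varphi(0) = 0$, the matrix $\varphi(A)$ has entries in $P_2$; skew-symmetry is preserved because $\varphi$ is a ring homomorphism.

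The key observation is that the Pfaffian is a polynomial in the matrix entries with integer coefficients, so $\Pf(\varphi(A)_J) = \varphi(\Pf(A_J))$ for every $J \subseteq E$. It therefore suffices to prove that $\Pf(A_J) \ne 0$ in $R_1$ if and only if $\varphi(\Pf(A_J)) \ne 0$ in $R_2$. The ``if'' direction is immediate since $\varphi(0) = 0$. For the ``only if'' direction, I would invoke Theorem~\ref{theorem:PFWick}: the representation $A$ corresponds to a strong $P_1$-orthogonal matroid $p = (\Pf(A_J))_{J \subseteq E} \in \bfP^N(P_1)$, so every $\Pf(A_J)$ already lies in $P_1 = G_1 \cup \{0\}$. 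Hence if $\Pf(A_J) \ne 0$, then $\Pf(A_J) \in G_1$, and since $\varphi(G_1) \subseteq G_2$, its image is a unit of $R_2$, in particular nonzero.

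Combining these observations yields
\[
\{J \subseteq E : \Pf(\varphi(A)_J) \ne 0\} = \{J \subseteq E : \Pf(A_J) \ne 0\} = \cB \Delta T,
\]
so the pair $(\varphi(A), T)$ certifies $M$ as a $P_2$-representable orthogonal matroid. The only genuine subtlety is the passage from ``$\Pf(A_J) \ne 0$ in $R_1$'' to ``$\varphi(\Pf(A_J)) \ne 0$ in $R_2$'', which is not automatic for an arbitrary ring homomorphism between ambient rings; the leverage needed to cross this gap comes from Theorem~\ref{theorem:PFWick}, which promotes the nonzero Pfaffians of a representation to genuine elements of the unit subgroup $G_1$, where $\varphi$ is forced to behave well.
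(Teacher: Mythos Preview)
Your approach is identical to the paper's: apply $\varphi$ entrywise to a representing matrix $A$ and argue that $(\varphi(A),T)$ represents $M$ over $P_2$. You have correctly isolated the one nontrivial step---that $\Pf(A_J) \ne 0$ in $R_1$ should force $\varphi(\Pf(A_J)) \ne 0$ in $R_2$---which the paper simply asserts without comment.

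Your justification of this step via Theorem~\ref{theorem:PFWick}, however, is circular. That theorem is stated for points $p$ \emph{already assumed} to lie in $\bfP^N(P_1)$, so its condition~(1) presupposes $\Pf(A_J) = p_J \in P_1$ for every $J$; invoking the theorem to \emph{conclude} that $\Pf(A_J) \in G_1 \cup \{0\}$ begs the question. And the desired conclusion is not automatic from the definition of $P$-representability given in the paper: over $\F_1^{\pm} = (\{\pm 1\},\Z)$, the $4 \times 4$ skew-symmetric matrix with $a_{12}=a_{13}=a_{14}=a_{23}=a_{34}=1$ and $a_{24}=-1$ has all entries in $P$ but $\Pf(A_{[4]}) = 3 \notin \{0,\pm 1\}$; under the partial-field homomorphism $\F_1^{\pm} \to \F_3$ this Pfaffian maps to $0$, so $\varphi(A)$ does not represent the same orthogonal matroid as $A$. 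Thus, with the paper's literal definition, pushing forward an \emph{arbitrary} representing matrix need not succeed, and both your argument and the paper's leave exactly this point unaddressed.
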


\begin{proof}
Let $A = (a_{ij})$ be a skew-symmetric matrix over $P_1$ and $T$ a subset of $E$ such that the pair $(A,T)$ represents $M$. 
Consider the new matrix $B = \varphi(A) := (\varphi(a_{ij}))$. If $\Pf(A_J) = 0$ then $\Pf(B_J) = \varphi(0) = 0 \in P_2$. If $\Pf(A_J) \neq 0$, then $\Pf(B_J) = \varphi(\Pf(A_J)) \neq 0$. This shows that the pair $(B,T)$ represents $M$. 
\end{proof}

Combined with Lemma~\ref{lemma:homomtofield}, this yields:

\begin{coro}
If an orthogonal matroid is representable over a partial field $P$, then it is representable over some field $k$. 
\end{coro}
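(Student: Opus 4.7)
The plan is to chain together the two results immediately preceding the corollary. Specifically, Lemma~\ref{lemma:homomtofield} already provides, for any partial field $P = (G,R)$, a homomorphism $\varphi : P \to k$ into some field $k$ (obtained by taking a maximal ideal $\fm \subseteq R$ and passing to the quotient $R/\fm$). The preceding lemma then says that orthogonal matroid representability transfers along homomorphisms of partial fields. Composing these two facts yields exactly the corollary.

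More concretely, suppose $M$ is representable over $P$, witnessed by a pair $(A, T)$, where $A = (a_{ij})$ is a skew-symmetric matrix with entries in $P$ and $T \subseteq E$ is a subset such that
\[
\cB \Delta T = \{J \subseteq E \; \vert \; \Pf(A_J) \ne 0\}.
\]
First I would invoke Lemma~\ref{lemma:homomtofield} to produce a field $k$ and a homomorphism $\varphi : P \to k$. Then I would invoke the preceding lemma with $P_1 = P$ and $P_2 = k$: applying $\varphi$ entrywise to $A$ produces a skew-symmetric matrix $B = \varphi(A)$ over $k$, and because $\varphi$ is a ring homomorphism sending $G$ into $k^\times$, one has $\Pf(B_J) = \varphi(\Pf(A_J))$, which is zero precisely when $\Pf(A_J) = 0$. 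Hence the pair $(B, T)$ represents $M$ over $k$.

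There is essentially no obstacle here; the corollary is a formal consequence of the two results it is explicitly said to combine. The only thing one has to check is that the composition is well-typed (a homomorphism $P \to k$ into a genuine field is still a homomorphism of partial fields, treating $k$ as the partial field $(k^\times, k)$), which is immediate from the definitions.
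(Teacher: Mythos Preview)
Your proposal is correct and matches the paper's approach exactly: the paper simply states that the corollary follows by combining the preceding lemma (representability transfers along partial field homomorphisms) with Lemma~\ref{lemma:homomtofield}, and you have spelled out precisely this combination.
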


We conclude:

\begin{coro}
Asymptotically 100\% of all orthogonal matroids are not representable over any partial field.
\end{coro}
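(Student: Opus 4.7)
The plan is to reduce the partial-field assertion to the field assertion, exploiting the two corollaries immediately preceding the target statement.

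First, I would invoke the preceding corollary, which says that every orthogonal matroid representable over a partial field $P$ is representable over some field $k$. Its proof chain is short: by Lemma~\ref{lemma:homomtofield}, every partial field $P = (G, R)$ admits a ring homomorphism $\varphi: R \to k$ (take $k = R/\fm$ for a maximal ideal $\fm \subseteq R$) carrying $G$ into $k^\times$, and the Lemma on preservation of representability under partial-field homomorphisms then transports a representing pair $(A, T)$ over $P$ to the pair $(\varphi(A), T)$ over $k$. The one substantive check is that the zero-pattern of the Pfaffian vector is preserved: each $\Pf(A_J)$ lies in $G \cup \{0\}$ by the definition of a $P$-representation, and since the Pfaffian is a polynomial with integer coefficients in the matrix entries, $\Pf(\varphi(A)_J) = \varphi(\Pf(A_J))$; the inclusion $\varphi(G) \subseteq k^\times$ then guarantees that $\Pf(\varphi(A)_J) = 0$ if and only if $\Pf(A_J) = 0$, so exactly the same subsets $J$ (twisted by $T$) form the bases.

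Second, I would apply the other preceding corollary, namely that asymptotically $100\%$ of orthogonal matroids on $[n]$ are \emph{not} representable over any field. Combined with the containment just established, the set of partial-field-representable orthogonal matroids on $[n]$ sits inside the set of field-representable ones, so the same asymptotic statement holds verbatim after replacing "field" by "partial field".

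I do not expect any real obstacle: the heavy lifting has already been done in the Nelson-style upper bound on the number of field-representable (normal) orthogonal matroids and in the Knuth-type lower bound on the total number of matroids (hence orthogonal matroids). The final corollary is essentially a one-line synthesis of the two preceding corollaries, exactly parallel to the deduction of the analogous statement for ordinary matroids at the end of Section~\ref{section:G-P}.
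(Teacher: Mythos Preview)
Your proposal is correct and follows exactly the same route as the paper: the paper simply writes ``We conclude:'' before the corollary, relying on the immediate combination of the two preceding corollaries (partial-field-representable implies field-representable, and asymptotically 100\% of orthogonal matroids are not field-representable). Your write-up actually supplies more detail than the paper does, but the logical structure is identical.
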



\begin{thebibliography}{9999}

\bibitem[BB19]{BB19}
M.~Baker and N.~Bowler. 
\newblock{Matroids over partial hyperstructures.}
\newblock{{\em Adv. Math.}, 343: 821-863, 2019. }

\bibitem[BL21]{BL21}
M.~Baker and O.~Lorscheid. 
\newblock{The moduli space of matroids.}
\newblock{{\em Adv. Math.}, 390: 118 pages, 2021. }

\bibitem[BGW03]{BGW03}
A.~Borovik, I.~Gelfand, and N.~White. 
\newblock{\em Coxeter matroids}, volume 216 of {\em Progress in Mathematics.}
\newblock{Birkh\"auser Boston, Boston, MA, 2003. }

\bibitem[Bo89]{Bo89}
A.~Bouchet. 
\newblock{Maps and $\Delta$-matroids.}
\newblock{\em Discrete Math.}, 78(1): 59-71, 1989. 

\bibitem[Ca49]{Ca49}
A.~Cayley.  
\newblock{Sur les d\'eterminants gauches. (Suite du Mémoire T. XXXII. p. 119).}
\newblock{\em Journal f\"ur die reine und angewandte Mathematik.}, 38: 93-96, 1849.

\bibitem[JK23]{JK23}
Tong Jin and Donggyu Kim. 
\newblock{Orthogonal matroids over tracts.}
\newblock{Preprint, arXiv:2303.05353, 2023. }

\bibitem[Kn74]{Kn74}
D.~Knuth.  
\newblock{The asymptotic number of geometries.}
\newblock{\em J. Combin. Theory. Ser. A}, 16(3): 398-400, 1974.

\bibitem[MS15]{MS15}
D.~Maclagan and B.~Sturmfels. 
\newblock{\em Introduction to tropical geometry}, volume 161 of {\em Graduate Studies in Mathematics.}
\newblock{American Mathematical Society, Providence, RI, 2015. }

\bibitem[Mu99]{Mu99}
K.~Murota. 
\newblock{\em Matrices and Matroids for Systems Analysis}, volume 20 of {\em Algorithms and Combinatorics.}
\newblock{Springer Verlag, Berlin, 1999. }

\bibitem[Ne18]{Ne18}
P.~Nelson.  
\newblock{Almost all matroids are nonrepresentable.}
\newblock{\em Bull. London Math. Soc.}, 50: 245-248, 2018.

\bibitem[PvZ13]{PvZ13}
R.~A.~Pendavingh and S.~H.~M.~van Zwam. 
\newblock{Skew partial fields, multilinear representations of matroids, and a matrix tree theorem.}
\newblock{\em Adv. in Appl. Math.}, 50(1): 201-227, 2013.

\bibitem[Ri12]{Ri12}
F.~Rinc\'on. 
\newblock{Isotropical linear spaces and valuated Delta-matroids.}
\newblock{\em J. Combin. Theory Ser. A}, 119(1): 14-32, 2012. 


\bibitem[SW96]{SW96}
C.~Semple and G.~Whittle. 
\newblock{Partial fields and matroid representation.}
\newblock{\em Adv. in Appl. Math.}, 17(2): 184-208, 1996. 

\bibitem[We93]{We93}
W.~Wenzel. 
\newblock{Pfaffian forms and $\Delta$-matroids.}
\newblock{\em Discrete Mathematics}, 115: 253-266, 1993. 


\end{thebibliography}
\end{document}